\documentclass[11pt]{amsart}
\usepackage{mathrsfs,amssymb}
\usepackage{amsfonts}
\usepackage{amssymb}
\usepackage{amsmath}
\usepackage{amsthm}
\usepackage{relsize}
\usepackage[mathscr]{euscript}

\newtheorem{theorem}{Theorem}[section]
\newtheorem{lemma}[theorem]{Lemma}

\newtheorem{corollary}[theorem]{Corollary}
\newtheorem{conjecture}[theorem]{Conjecture}

\newtheorem{Remark}[theorem]{Remark}

\setlength{\parindent}{0cm}

\newcommand{\R}{\mathbb{R}}

\newcommand{\Z}{\mathbb{Z}}

\newcommand{\N}{\mathbb{N}}

\newcommand{\p}{\mathfrak{p}}

\newcommand{\A}{\mathfrak{a}}

\begin{document}

\title[Symplectic Family One-Level Density]{Lower Order Terms for the One-Level Density of a Symplectic Family of Hecke L-Functions}
\author{Ezra Waxman}
\date{\today}
\address{Tel Aviv University, School of Mathematical Sciences, Tel-Aviv 69978, Israel}
\address{Charles University, Faculty of Mathematics and Physics, Department of Algebra, Sokolovska 83, 18600 Praha 8, Czech Republic}
\address{Technische Universität Dresden, Fakultät Mathematik, Institut für Algebra, Zellescher Weg 12-14, 01062 Dresden, Germany}
\email{ezrawaxman@gmail.com}
\numberwithin{equation}{section}
\maketitle
\begin{abstract}
In this paper we apply the $L$-function Ratios Conjecture to compute the one-level density for a symplectic family of $L$-functions attached to Hecke characters of infinite order.  When the support of the Fourier transform of the corresponding test function $f$ reaches $1$, we observe a transition in the main term, as well as in the lower order term.  The transition in the lower order term is in line with behavior recently observed by D. Fiorilli, J. Parks, and A. S\"odergren in their study of a symplectic family of quadratic Dirichlet $L$-functions \cite{FiorParkS, FiorParkS2}.  We then directly calculate main and lower order terms for test functions $f$ such that supp($\widehat{f}) \subset [-\alpha,\alpha]$ for some $\alpha <1$, and observe that this unconditional result is in agreement with the prediction provided by the Ratios Conjecture.  As the analytic conductor of these L-functions grow twice as large (on a logarithmic scale) as the cardinality of the family in question, this is the optimal support that can be expected with current methods. Finally, as a corollary, we deduce that, under GRH, at least 75$\%$ of these $L$-functions do not vanish at the central point.
\end{abstract}
\section{Introduction}
\subsection{One-Level Densities and Random Matrices}
The statistical distribution of zeros across a family of $L$-functions near the central point has emerged as a popular object of study.  This is due to its role in a range of arithmetic applications, such as determining the size of the Mordell-Weil groups of elliptic curves \cite{BSD, BSD2}, and bounding the size of the class number of imaginary quadratic fields \cite{CI}.  It is also of purely theoretical interest since such information can be used to determine the symmetry type of the family in question.\\
\\
Let $\rho_{\chi} = 1/2 +i \gamma_{\chi}$ denote a generic non-trivial zero of a given $L$-function $L(s,\chi)$.  Fix $f$ to be an even Schwarz function such that $\widehat{f}$ is compactly supported.  The \textbf{$($scaled$)$ one-level density} is then defined as
\begin{equation*}
D_{1}(\chi;f,R):=\sum_{\rho_{\chi}}f\left(\frac{\log R}{\pi} \cdot \gamma_{\chi}\right),
\end{equation*}
where $R$ is a scaling parameter $($typically related to the analytic conductor of the family, $c_{R})$, intended to ensure that the mean spacing between low-lying zeroes is equal to 1.\\
\\
If $\mathcal{F}(R)$ is a family of $L$-functions with bounded analytic conductor, one moreover defines the \textbf{averaged (scaled) one-level density} over the family to be
\begin{equation*}
D_{1}(\mathcal{F}(R);f):=\frac{1}{|\mathcal{F}(R)|}\sum_{\chi \in \mathcal{F}(R)}D_{1}(\chi;f,R).
\end{equation*}

Much research has been dedicated to studying the one-level density for a variety of families, such as $L$-functions attached to Dirichlet characters, elliptic curves, cuspidal newforms, Maass forms, and many other objects \cite{DuMill, Gul, HughRud, ILS}.  Usually, one can compute the one-level density for a limited class of test functions, namely those for which supp($\widehat{f}) \subseteq [-\alpha,\alpha]$, for a particular $\alpha \in \R$.  In all such cases to date, the computation has agreed with that of a random matrix ensemble in the restricted support considered.\\
\\
The ``natural" upper bound on $\alpha$ is determined by a collection of factors, including the analytic conductor of the $L$-functions, and the sparseness of the family in question.  In recent work, Drappeau, Pratt, and Radziwill \cite{DPR} computed the one-level density of low-lying zeros over the family of primitive Dirichlet characters, and reached an upper bound of $\alpha = 2+50/1093$, providing the first unconditional result in which supp($\widehat{f})$ may be extended beyond $\alpha = 2$.  While there is no satisfactory definition, uniform across families, as to what constitutes a natural upper bound, we note that, at present, there are no known families for which the one-level density has been unconditionally computed beyond $\alpha = 2 \log |\mathcal{F}(R)|/\log |c_R|$.  For the particular family of degree-two $L$-functions that we will be studying in this paper (denoted $\mathcal{F}(K)$), the unconditional result (Theorem \ref{oneleveldensity}) is restricted to test functions for which $\alpha < 1$.  The analytic conductor $c_{k} \asymp k^{2}$ of the $L$-functions in question grows twice as large (on a logarithmic scale) as the cardinality of $\mathcal{F}(K)$, and so we consider this to be the natural upper bound for this particular family.
\\
\\
Let $G$ denote an $M \times M$ classical matrix group (i.e. the group of unitary, symplectic, orthogonal, or special orthogonal matrices), and set $dA$ to be the normalized Haar measure on $G$.  As $A \in G$ is unitary, its eigenvalues $\{e^{i\varphi_{1}},e^{i\varphi_{2}},\dots, e^{i\varphi_{M}}\}$ lie on the unit circle, and we may define the ordered sequence

\begin{equation*}
0 \leq \varphi_{1}(A) \leq \varphi_{2}(A) \leq \dots \leq \varphi_{M}(A) < 2\pi.
\end{equation*}
Writing $\varphi_{j+l\cdot M}(A) := \varphi_{j}(A)+2\pi\cdot l$, we may then extend our definition to the set $\{\varphi_{n}\}_{n \in \Z}$ of ``all" real angles of $A$.  Defining $\vartheta (i) := M\varphi_{i}(A)/2\pi$, we then consider the \textit{normalized} sequence $\{\vartheta_{i}\}_{i \in \Z}$ whose average spacing is equal to one.\\
\\
For $f$ as above, define
\begin{equation*}
W_{1}(f,A):=\sum_{n \in \Z}f(\vartheta (n)),
\end{equation*}
as well as 
\begin{equation*}
W_{1}(f,G) := \int_{G} W_{1}(f,A)dA.
\end{equation*}

The \textit{Katz-Sarnak Density Conjecture} \cite{KatzSarn1,KatzSarn2}  states that in the limit as the conductor tends to infinity, $D_{1}(\mathcal{F}(R),f)$ is modeled by the the one-level scaling density of eigenvalues near 1 of the classical compact group $G$ corresponding to the symmetry type of $\mathcal{F}(R)$.  In other words, for an admissible test function $f$, we expect

\begin{equation*}
\lim_{R \rightarrow \infty} D_{1}(\mathcal{F}(R),f) = \lim_{N \rightarrow \infty}W_{1}(f,G)= \int_{\R}f(x)W_{1,G}(x)dx,
\end{equation*}
where
\begin{align}\label{matrix calculation}
W_{1,G} = \left\{
\begin{array}{l l l l l}
1 & \text{ if } G = U \\
1-\frac{\sin 2\pi t}{2\pi t} & \text{ if } G = USp \\
1+\frac{1}{2}\delta_{0}(t) & \text{ if } G = O \\
1+\frac{\sin 2\pi t}{2\pi t} & \text{ if } G = SO \text{(even)} \\
1+\frac{1}{2}\delta_{0}(t)-\frac{\sin 2\pi t}{2\pi t} & \text{ if } G = SO \text{(odd)}
\end{array} \right.
\end{align}
is computed explicitly in \cite[\S AD.12.6]{KatzSarn1}.  Here $\delta_{0}$ is the Dirac distribution, and $U$, $USp$, $O$, $SO\text{(even)}$, $SO\text{(odd)}$ denote the unitary, symplectic, orthogonal, special even-orthogonal, and special odd-orthogonal matrix groups, respectively.\\
\\
Random Matrix Theory has proved extremely successful at modelling a variety of statistical quantities related to the zeta function, and across families of $L$-functions.  Montgomery's work on the pair correlation of the zeta zeros \cite{Montgomery} pointed to a deep connection between statistics attached to zeros of the zeta function and eigenvalues of random matrices, while Rudnick and Sarnak's work showed that the local spacing distribution between zeros high up on the critical strip of any fixed automorphic $L$-function obeys universal behavior in line with Dyson's computation for the GUE model \cite{RS}.  The Katz-Sarnak Density Conjecture extends this connection to a relationship between the low-lying zeros across a family of $L$-functions near the central point and distribution laws for eigenvalues of random matrices.  Here universality breaks down, and, as above, one may use the resulting computation to distinguish between families of different symmetry type.\\
\\
Motivated by calculations for the characteristic polynomials of matrices averaged over the compact classical groups, Conrey, Farmer, and Zirnbauer \cite{Conrey, ConreyII} suggested a recipe for calculating averages of quotients of products of shifted $L$-functions evaluated at certain values on the critical strip.  The \textit{$L$-function Ratios Conjecture} can be used to give precise predictions for a variety of statistics across families of $L$-functions, including the one-level density \cite{ConreySnaith}.  Our goal in this work will be to use the Ratios Conjecture to motivate a precise conjecture for the one-level density of a family of $L$-functions attached to Hecke characters of infinite order, and then to demonstrate the accuracy of this conjecture for a restricted class of test functions.
\subsection{Main Results}
Let $\alpha \in \Z[i]$ be an element in the ring of Gaussian integers and let $\A = \langle \alpha \rangle \subseteq \Z[i]$ denote the ideal generated by $\alpha$.  For any $k \in \Z$, the \textit{Hecke character} $\Xi_{k}(\A):= \left(\alpha/ \overline{\alpha}\right)^{2k}$ provides a well-defined function on the ideals of $\Z[i]$.  Attached to any such character $\Xi_{k}$ is a \textit{Hecke $L$-function} of the form
\begin{align*}
L_{k}(s) &:= \ \sum_{\substack{\A \subseteq \Z[i]\\ \A \neq 0}}\frac{\Xi_{k}(\A)}{N(\A)^{s}}=\prod_{\p \textnormal{ prime}}\left(1-\frac{\Xi_{k}(\p)}{N(\p)^s}\right)^{-1}, \hspace{5mm} \textnormal{Re}(s)>1,
\end{align*}

where $N(\A) := \alpha\overline{\alpha}$ is the \textit{norm} of $\A$, and where $\p$ runs through the prime ideals in $\Z[i]$.  In this work, we study the average one-level density across the family
\begin{equation*}
\mathcal{F}(K) := \{L_{k}(s):1 \leq k \leq K\}.
\end{equation*}

This family of characters arises in several applications connected to the angular distribution of Gaussian primes \cite{Kubilius 1950, Kubilius 1955}, and, in particular, the zero distribution near the central point across $\mathcal{F}(K)$ affects the variance of Gaussian primes across sectors \cite{SMALL paper, RudWax}.\\
\\
Let $\rho_{k} = 1/2+i \gamma_{k}$ denote a generic non-trivial zero of
$L_{k}(s)$.  By Theorem 5.8 in \cite{IwanKow}, the growth of the number of nontrivial zeros of $L(s,\Xi_k)$ in a fixed rectangle is 
\begin{equation}\label{density of zeros}
\#\{\rho_{k}: 0\leq \Im(\rho_{k})\leq T_0\}  \sim \frac {T_0 \log |k|}{\pi}, \quad k\to \infty, \quad T_0>0\;{\rm fixed,}
\end{equation}
so that the density of zeros in $L_{k}(s)$ near the central point is $\log |k|/\pi$.  We thus choose $K$ as our scaling parameter so that the mean value spacing between zeros is 1.  The average one-level density across $\mathcal{F}(K)$ is then
\begin{align*}
D_{1}(\mathcal{F}(K);f)=\frac{1}{K}\sum_{k=1}^{K}\sum_{\gamma_{k}}f\left(\gamma_{k}\frac{ M}{\pi}\right),
\end{align*}
where $M:= \log K$.\\
\\
Set the Fourier transform to be normalized as $f(u) := \int_{\R}\widehat{f}(r)e^{2 \pi i r u}dr.$  Based on a Ratios Conjecture model, we suggest the following conjecture:

\begin{conjecture}\label{ratios prediction}
\begin{align*}
D_{1}(\mathcal{F}(K);f)& = \widehat{f}(0)- \frac{1}{2}\int_{\R}\widehat{f}(x) \cdot 1_{[-1,1]}(x)dx+\frac{1}{M}\bigg(c\cdot \widehat{f}(0)-d\cdot \widehat{f}(1)\bigg)(1+o(1)),
\end{align*}
where
\begin{align}\label{d constant}
d &:= 3 \log 2-1-\textnormal{log }\pi+4\log |\eta(i)|-2\sum_{p\equiv 3(4)}\frac{\log p}{p^2-1},
\end{align}
and 
\begin{align}\label{c constant}
c &:= d -c_{1}-\gamma_{0},
\end{align}
where $c_{1}$ is defined as in \textnormal{(\ref{c1})}.  Here $\eta(s)$ is the \textit{Dedekind eta function}, and
\begin{equation*}
\eta(i)=e^{-\frac{\pi}{12}}\prod_{n=1}^\infty \bigg(1-e^{- 2\pi n}\bigg) = \frac{\Gamma\left(\frac 1 4\right)}{2\pi^{3/4}}.
\end{equation*}
\end{conjecture}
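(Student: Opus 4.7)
The plan is to follow the Conrey--Snaith recipe for applying the $L$-function Ratios Conjecture to a symplectic family. The starting point is the standard contour-integral representation
\begin{equation*}
\sum_{\gamma_{k}}f\!\left(\gamma_{k}\tfrac{M}{\pi}\right)=\frac{1}{2\pi i}\!\left(\int_{(c)}-\int_{(1-c)}\right)\frac{L_{k}'}{L_{k}}(s)\,f\!\left(\tfrac{M}{\pi}\cdot\tfrac{s-1/2}{i}\right)ds,
\end{equation*}
valid for $c$ slightly larger than $1$, obtained by applying the residue theorem to a tall thin rectangle enclosing the nontrivial zeros. Using the functional equation of $L_{k}(s)$ on the line $(1-c)$, both integrals are moved into the region of absolute convergence, so that after averaging over $k$ the problem reduces to understanding
\begin{equation*}
R_{K}(\alpha,\gamma):=\frac{1}{K}\sum_{k=1}^{K}\frac{L_{k}(\tfrac12+\alpha)}{L_{k}(\tfrac12+\gamma)},
\end{equation*}
and then differentiating with respect to $\alpha$ and setting $\alpha=\gamma$ to recover the average of $L_{k}'/L_{k}$.

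Next, I would apply the Ratios recipe to $R_{K}(\alpha,\gamma)$. Expand $L_{k}(\tfrac12+\alpha)$ as a Dirichlet series, replace $1/L_{k}(\tfrac12+\gamma)$ by its Dirichlet series via the M\"obius inversion, apply the functional equation to the ``dual'' pieces, and retain only those terms that survive the averaging in $k$. For a symplectic family this yields
\begin{equation*}
R_{K}(\alpha,\gamma)\ \sim\ A_{K}(\alpha,\gamma)\ +\ X_{K}(\alpha)\,A_{K}(-\gamma,-\alpha),
\end{equation*}
where $X_{K}$ collects the gamma-factor ratio coming from the functional equation (of size $\asymp K^{-2\alpha}$, reflecting the analytic conductor $c_{k}\asymp k^{2}$), and $A_{K}(\alpha,\gamma)$ is an absolutely convergent Euler product over prime ideals of $\Z[i]$, holomorphic near the origin after factoring out the polar contribution from $\zeta_{\Q(i)}(1+\alpha+\gamma)/\zeta_{\Q(i)}(1+2\gamma)$. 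To make $A_{K}$ explicit, I would split the Euler product by the splitting behaviour of $p$ in $\Z[i]$: primes $p\equiv 1(4)$ split, primes $p\equiv 3(4)$ are inert (producing the sum $\sum_{p\equiv 3(4)}\log p/(p^{2}-1)$ that appears in $d$), and the prime $2$ ramifies (contributing $\log 2$ terms).

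Substituting the differentiated $R_{K}$ back into the contour integral, the first integral contributes the ``diagonal'' piece, producing $\widehat{f}(0)$ plus an arithmetic lower-order constant which, after pushing the contour to the critical line, becomes $c\,\widehat{f}(0)/M$ with $c$ involving $c_{1}$, the Euler-Mascheroni constant $\gamma_{0}$, and the polar parts of $\zeta_{\Q(i)}$ (whose residue $\pi/4$ underlies the appearance of $\eta(i)=\Gamma(1/4)/(2\pi^{3/4})$ via the Chowla--Selberg formula). The ``swap'' term contributes via the factor $X_{K}(\alpha)$; opening $X_{K}$ using $K^{-2\alpha}=\int\widehat{f}(y)e^{-2\pi i\alpha y\cdot\frac{M}{M}}\,dy$ style manipulations, one finds that for $|y|<1$ this term integrates to $-\tfrac12\widehat{f}(y)$, producing the main-term transition $-\tfrac12\int\widehat{f}(x)\mathbf{1}_{[-1,1]}(x)\,dx$, while the value at $|y|=1$ contributes the boundary term $-d\,\widehat{f}(1)/M$ with $d$ equal to the finite part of the arithmetic and archimedean factors evaluated at the transition point.

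The main obstacle is the bookkeeping for the lower-order constants $c$ and $d$: one must expand $A_{K}(\alpha,\gamma)$, the gamma-factor ratio $X_{K}(\alpha)$, and the zeta quotient to first order in the shifts, then isolate precisely which finite parts lie on the diagonal (contributing to $c$) versus on the swapped contour (contributing to $d$). Getting $d$ exactly as $3\log 2-1-\log\pi+4\log|\eta(i)|-2\sum_{p\equiv 3(4)}\log p/(p^{2}-1)$ requires combining the ramified Euler factor at $2$, the $\log\pi$ coming from the completed gamma factor $\Gamma_{\C}(s+2k)$ at $s=1/2$, the residue computation for $\zeta_{\Q(i)}$ expressed via Kronecker's limit formula (hence $\eta(i)$), and the inert-prime sum; once these are assembled and matched to the Conrey--Snaith template, the conjectural identity follows.
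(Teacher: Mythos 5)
Your overall strategy is the one the paper follows -- contour representation of $D_{1}(\mathcal{F}(K);f)$, the functional equation on the $(1-c)$ line, the Conrey--Farmer--Zirnbauer recipe for $R_{K}(\alpha,\gamma)$, differentiation at $\alpha=\gamma$, and a term-by-term extraction of main and lower-order contributions -- so the architecture is right. But there are two concrete problems. First, the swapped term in your ratios asymptotic is written as $X_{K}(\alpha)\,A_{K}(-\gamma,-\alpha)$; for this family the approximate functional equation only replaces the numerator sum, so the correct structure is $G(\alpha,\gamma)+\frac{1}{1-2\alpha}\left(\frac{\pi}{2K}\right)^{2\alpha}G(-\alpha,\gamma)$, i.e.\ only $\alpha\mapsto-\alpha$ is flipped while $\gamma$ is unchanged. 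This is not cosmetic: after setting $\alpha=\gamma=r$ the arithmetic factor in the swap term is $A(-r,r)$, whose derivative at $r=0$ equals $4\sum_{p\equiv 3(4)}\frac{\log p}{p^{2}-1}$ and is precisely what puts the inert-prime sum $-2\sum_{p\equiv 3(4)}\frac{\log p}{p^{2}-1}$ into $d$. With your version one would evaluate $A(-r,-r)=1$, lose this contribution entirely, and obtain the wrong transition constant. Second, taking $c$ slightly larger than $1$ is incompatible with the domain of validity of the Ratios Conjecture, which requires $-\frac14<\textnormal{Re}(\alpha)<\frac14$ and $\textnormal{Re}(\gamma)\gg 1/\log K$; the contour must sit at $\frac12+M^{-1}<c<\frac34$ (which is why the derivation in the paper invokes GRH even for the conjectural statement).

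Beyond these, the actual content of the conjecture -- the explicit values of $c$ and $d$ -- is deferred to ``bookkeeping,'' and that bookkeeping is where essentially all of the work lies: one must (i) average the $\Gamma$-ratio over $1\le k\le K$ to get $\frac{(2K)^{-2\alpha}}{1-2\alpha}$, which is where the archimedean constants $\log 2-1-\log\pi$ in $W_{f}$ and the $\log\frac{\pi}{2}+1$ in the swap term originate; (ii) compute $\frac{L'}{L}(1,\chi_{1})=\gamma_{0}-2\log 2-4\log|\eta(i)|$ via the Kronecker limit formula applied to the constant term (not the residue) of $\zeta(s)L(s,\chi_{1})$ at $s=1$; (iii) extract $c_{1}$ from $S_{\zeta}=-\frac{f(0)}{2}-\frac{c_{1}\widehat{f}(0)}{M}+\cdots$ by partial summation against $\psi(t)-t$; and (iv) show that the swap term produces exactly $\frac{f(0)}{2}-\frac12\int_{\R}\widehat{f}(x)1_{[-1,1]}(x)\,dx-\frac{d}{M}\widehat{f}(1)$, with the $\frac{f(0)}{2}$ cancelling against $S_{\zeta}$. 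Your sketch does not carry any of these out, so as written it establishes the shape of the answer but not the identities $c=d-c_{1}-\gamma_{0}$ and $d=3\log 2-1-\log\pi+4\log|\eta(i)|-2\sum_{p\equiv 3(4)}\frac{\log p}{p^{2}-1}$ that the conjecture asserts.
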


Note that by Plancherel's identity,

\begin{align}\label{small support symplectic}
\int_{\R}f(t)\frac{\sin 2\pi t}{2\pi t}dt=\frac{1}{2}\int_{-1}^{1}\widehat{f}(x) \cdot 1_{[-1,1]}dx.
\end{align}

It then follows from (\ref{matrix calculation}) that Conjecture \ref{ratios prediction} agrees with the Density Conjecture and that $\mathcal{F}(K)$ has \textit{symplectic monodromy}.  This agrees with the findings in the function field setting, where the analogue to $\mathcal{F}(K)$ (the family of so-called \textit{super-even characters}) is also found to have symplectic monodromy \cite{Katz supereven}.  Note, moreover, the emergence of a sharp transition in both the main term and the lower order term of Conjecture \ref{ratios prediction}.  A similar transition was observed for a symplectic family of Dirichlet $L$-functions by D. Fiorilli, J. Parks, and A. S\"odergren \cite{FiorParkS, FiorParkS2}.\\
\\
To arrive at Conjecture \ref{ratios prediction} we make use of Cauchy's residue theorem to write $D_{1}(\mathcal{F}(K);f)$ in terms of two vertical contour integrals, which we then compute by applying the ratios recipe.  The resulting expression involves several terms (depending on $K$ and $f$), which we then explicitly compute: $i$) a term $W_{f}$ emerging from the infinite place of each $L_{k}(s)$ $ii)$ a collection of terms $S_{\zeta},S_{L}$, and $S_{A'}$ emerging from the first piece of the approximate functional equation within the ratios recipe, and $iii)$ a final term $S_{\Gamma}$ emerging from the second piece of the approximate function equation.  When supp$(\widehat{f}) \subset (-1,1)$, $W_{f}$ and $S_{\zeta}$ contribute to the main term, while lower order contributions come from $W_{f}, S_{\zeta},S_{L}$, and $S_{A'}$.  $S_{\Gamma}$ accounts for the sharp transition in both the main and lower order terms, contributing only when supp$(\widehat{f}) \not \subset (-1,1)$.\\
\\
Moreover, we prove the following theorem:

\begin{theorem}\label{oneleveldensity}
Suppose that supp$(\widehat{f}) \subset [-\alpha,\alpha]$ for some $\alpha <1$.  Then
\begin{align}
D_{1}(\mathcal{F}(K);f)&= \widehat{f}(0)- \frac{f(0)}{2}+c\cdot \frac{\widehat{f}(0)}{M}+O\left(\frac{1}{M^{2}}\right),
\end{align}
where $c$ is as above.
\end{theorem}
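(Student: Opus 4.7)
My plan is to prove Theorem \ref{oneleveldensity} unconditionally via Weil's explicit formula applied to each Hecke $L$-function $L_k(s)$, then averaging over $1\le k\le K$. This bypasses the Ratios-Conjecture machinery used to derive Conjecture \ref{ratios prediction}. Because the support hypothesis $\alpha<1$ lies strictly inside the natural barrier at $\alpha=1$, all genuinely oscillating arithmetic sums should collapse on average to acceptable error, and no heuristic recipe is required.

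The starting point is the explicit formula applied to $L_k(s)$ with test function $t\mapsto f(Mt/\pi)$, producing a schematic identity
\begin{align*}
\sum_{\gamma_k} f\!\left(\frac{M\gamma_k}{\pi}\right) = \mathrm{Arch}_k(f) - \frac{2}{M}\sum_{\mathfrak{n}\subset \Z[i]} \frac{\Lambda(\mathfrak n)\,\Xi_k(\mathfrak n)}{N(\mathfrak n)^{1/2}}\widehat{f}\!\left(\frac{\log N(\mathfrak n)}{M}\right),
\end{align*}
where $\mathrm{Arch}_k(f)$ collects the conductor contribution $\frac{\log c_k}{2M}\widehat f(0)$ (with $c_k\asymp k^2$) and the logarithmic derivative of the archimedean factor $\Gamma(s+2k)$ along the critical line. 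Stirling's expansion of $\Gamma'/\Gamma(\tfrac12+it+2k)$ to precision $O(1/k^2)$, followed by Euler--Maclaurin to average $\tfrac1K\sum_{k=1}^K \log k = \log K - 1 + O(\log K/K)$, produces the main term $\widehat f(0)$ together with an explicit $\widehat f(0)/M$ contribution, with remainder $O(1/M^2)$.

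The heart of the argument is the averaged prime-ideal sum, which I would decompose by splitting behavior in $\Z[i]$. The ramified prime $(1+i)$ contributes $O(1/M)$ with an explicit constant. For split primes $p\equiv 1\pmod 4$, writing $(p) = (\pi)(\bar\pi)$, the character value $\Xi_k((\pi)) = (\pi/\bar\pi)^{2k}$ oscillates genuinely in $k$; averaging in $k$ gives a geometric sum bounded by $O((K|1-(\pi/\bar\pi)^{2j}|)^{-1})$, and together with Hecke equidistribution of the angles $\arg(\pi/\bar\pi)$ and the support constraint $N(\mathfrak n)\le K^\alpha$, the total split contribution is $O(K^{-\delta})$ for some $\delta>0$, which is absorbed into the error precisely because $\alpha<1$. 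For inert primes $p\equiv 3\pmod 4$ we have $\Xi_k((p)) = 1$ identically, so averaging is trivial and the contribution reduces to
\begin{align*}
-\frac{4}{M}\sum_{p\equiv 3(4)}\sum_{j\ge 1}\frac{\log p}{p^j}\,\widehat{f}\!\left(\frac{2j\log p}{M}\right).
\end{align*}
The $j=1$ piece is evaluated by partial summation against the Mertens-type asymptotic for $p\equiv 3\pmod 4$, yielding $-\tfrac12 f(0)$ at leading order together with an explicit $1/M$ correction. The $j\ge 2$ tail converges absolutely and recombines via the product formula for $\eta(i)$, contributing the $4\log|\eta(i)|$ and $\sum_{p\equiv 3(4)}\log p/(p^2-1)$ terms appearing in the constant $d$.

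Collecting, the leading $\widehat f(0)$ and $f(0)$ terms appear with exactly the predicted coefficients, and what remains is to match the accumulated $1/M$ constants against $c = d - c_1 - \gamma_0$ from (\ref{c constant}). The main obstacles are twofold: first, executing the Mertens-type asymptotic for the inert $j=1$ sum with remainder $O(1/M^2)$ sharp enough to survive weighting by $\widehat f$; and second, the bookkeeping needed to identify the accumulated constants (Euler--Mascheroni $\gamma_0$, the Mertens constant for the progression $p\equiv 3\pmod 4$, and the eta-function contribution) with the constant $c$ predicted by the Ratios recipe. The restriction $\alpha<1$ enters essentially only in the split-prime bound, consistent with the sharp transition at $\alpha=1$ foreshadowed by Conjecture \ref{ratios prediction}.
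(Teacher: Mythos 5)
Your proposal is correct in outline and follows the same skeleton as the paper's proof: the explicit formula converts $D_{1}(\mathcal{F}(K);f)$ into an archimedean term plus prime sums decomposed by splitting type; the archimedean term is handled by Stirling plus Euler--Maclaurin exactly as in Lemma \ref{computing Wf}; and the split primes are killed on average using the geometric sum $\langle \Xi_{k}(\A)\rangle_{K}\ll (K\theta_{\A})^{-1}$ together with the support restriction. The one genuinely different step is your treatment of the inert (and ramified) contribution: you propose to evaluate $S_{\text{inert}}$ directly by partial summation against a Mertens-type asymptotic for $\sum_{p\le x,\, p\equiv 3(4)}\log p/p$, whereas the paper instead proves the exact identity $S_{\text{inert}}+S_{\text{ram}}=S_{\zeta}+S_{L}+S_{A'}+O(1/K)$ and recycles the contour-shift computations of Section 5 (Lemmas \ref{Szeta} and \ref{arithmetic terms}). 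Your route is viable --- the Mertens constant for the progression $3\bmod 4$ encodes $\tfrac{L'}{L}(1,\chi_{1})$ and hence, via Kronecker's limit formula, the $4\log|\eta(i)|$ term --- but note that this constant arises from the $j=1$ piece, not from the $j\ge 2$ tail as you suggest (the tail only produces the $\sum_{p\equiv 3(4)}\log p/(p^{2}-1)$ term), and you would have to redo from scratch the bookkeeping that the paper gets for free by matching against the Ratios computation. The paper's detour buys exactly that: the unconditional result visibly coincides with the conjectural one term by term.

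Two points need tightening. First, your split-prime bound cannot rest on ``Hecke equidistribution'' of the angles alone: to control $\sum_{\A}\Lambda(N(\A))N(\A)^{-1/2}\theta_{\A}^{-1}$ you need the quantitative lower bound $\theta_{\A}\gg N(\A)^{-1/2}$ (Lemma 2.1 of \cite{RudWax}) together with a spacing/packing argument comparing the sum to $\iint \log r\,\theta^{-1}\,d\theta\,dr$; qualitative equidistribution does not rule out an accumulation of anomalously small angles dominating the harmonic sum. Second, your schematic explicit formula carries $\widehat{f}\bigl(\log N(\mathfrak{n})/M\bigr)$ where the correct normalization for this family (conductor $\asymp k^{2}$, scaling by $M=\log K$) is $\widehat{f}\bigl(\log N(\mathfrak{n})/(2M)\bigr)$; with your normalization the split sum would only reach $N(\mathfrak{n})\le K^{\alpha}$ and the transition would sit at $\alpha=2$ rather than $\alpha=1$, which is inconsistent with your own (correct) final claim. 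With the normalization fixed, the range is $N(\A)\le K^{2\alpha}$ and the bound $S_{\text{split}}\ll K^{\alpha-1}\log K$ indeed forces $\alpha<1$.
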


Note that Theorem \ref{oneleveldensity} agrees with Conjecture \ref{ratios prediction}.  In fact, one may show that if supp$(\widehat{f}) \subset [-\alpha,\alpha]$ for some $\alpha< 1$, then $D_{1}(\mathcal{F}(K);f)$ agrees with the prediction of the Ratios Conjecture down to an accuracy of size $O(M^{-n})$, for any $n \geq 2$ (Remark \ref{remark unconditional match}).  For conciseness, we only explicitly compute the first such lower order term.\\
\\
To prove Theorem \ref{oneleveldensity}, we use the explicit formula to convert the sum over zeros of $L_{k}(s)$ into a sum over primes.  The resulting expression involves a collection of terms: $i)$ $W_{f}$ (as above) emerging from the infinite place of each $L_{k}(s)$ $ii)$ a term $S_{\text{inert}}$ emerging from the inert primes $iii)$ a term $S_{\text{ram}}$ emerging from the ramified prime, and $iv)$ a term $S_{\text{split}}$ emerging from the split primes.  We then show that

\begin{equation}\label{GRH Ratios Relation}
S_{\text{inert}}+S_{\text{ram}} = S_{\zeta}+S_{L}+S_{A'}+O\left(\frac{1}{K}\right),
\end{equation}
and apply our previous explicit computation of the right side of this equation.  When supp$(\widehat{f}) \subset [-\alpha,\alpha]$ for some $\alpha < 1$, we then demonstrate that the contribution from $S_{\text{split}}$ is negligible.  The breadth of the support of the test function in Theorem \ref{oneleveldensity} is thus limited by our inability to explicitly compute $S_{\text{split}}$ unconditionally, when $\alpha \geq 1$.  We may do so, however, conditionally on the Ratios Conjecture; in which case we find that $S_{\text{split}} = S_{\Gamma}+O\left(e^{-\frac{M}{2}}\right)$ for any admissible $f$ (Corollary \ref{character sum}).\\
\\
From Theorem \ref{oneleveldensity} we moreover obtain the following corollary concerning the non-vanishing of $L_{k}(1/2)$.
\begin{corollary}\label{nonvanish}
Assume the \textit{Generalized Riemann Hypothesis} (GRH).  Then
\begin{equation*}
\lim_{K \rightarrow \infty}\frac{1}{|K|}\{L_{k}(s) \in \mathcal{F}(K) \}:L_{k}(1/2) \neq 0 \}\geq \frac{3}{4}+o(1).
\end{equation*}
\end{corollary}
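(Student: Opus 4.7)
The plan is to derive the corollary from Theorem \ref{oneleveldensity} by combining positivity of zero-contributions (under GRH) with a parity constraint on the order of central vanishing. The key arithmetic input is that each $L_k$ is self-dual with root number $+1$: the bijection $\A \leftrightarrow \bar\A$ on non-zero ideals of $\Z[i]$ preserves norms and sends $\Xi_k(\A)$ to its complex conjugate, so $L_k$ has real Dirichlet coefficients and its completed functional equation reads $\Lambda_k(s) = \epsilon_k \Lambda_k(1-s)$ with $\epsilon_k \in \{\pm 1\}$; a classical root-number computation for Gr\"ossencharacters on $\Q(i)$ of trivial conductor and infinity type $(2k,-2k)$ yields $\epsilon_k = +1$. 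Consequently $\Lambda_k(1/2+h)$ is an even function of $h$, and any central zero of $L_k$ must have order at least $2$.

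I would then fix an even, non-negative Schwartz function $f$ with $\textrm{supp}(\widehat{f}) \subset [-\alpha, \alpha]$ for some $\alpha < 1$. Under GRH each $\gamma_k$ is real, so every summand $f(\gamma_k M/\pi)$ in the definition of $D_1(\mathcal{F}(K);f)$ is non-negative. Whenever $L_k(1/2) = 0$, the central zero contributes at least $2 f(0)$ to the inner sum, giving
\begin{equation*}
2 f(0) \cdot \frac{\#\{1 \leq k \leq K : L_k(1/2) = 0\}}{K} \leq D_1(\mathcal{F}(K);f) = \widehat{f}(0) - \frac{f(0)}{2} + o(1),
\end{equation*}
upon invoking Theorem \ref{oneleveldensity}. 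Rearranging, the vanishing proportion is at most $\tfrac{1}{2} \widehat{f}(0)/f(0) - \tfrac{1}{4} + o(1)$.

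Finally, I would optimize over admissible $f$. Writing $f = |g|^2$ with $\widehat{g}$ a smooth approximation to $\alpha^{-1/2} \mathbf{1}_{[-\alpha/2,\alpha/2]}$ and applying Plancherel's identity makes $\widehat{f}(0)/f(0)$ arbitrarily close to $1/\alpha$, so for each admissible $\alpha < 1$ the vanishing proportion is at most $\tfrac{1}{2\alpha} - \tfrac{1}{4} + o(1)$. Sending $\alpha \to 1^-$ yields the claimed bound of $1/4$ on vanishing, hence $3/4$ on non-vanishing. The main technical obstacle in this plan is the root-number identity $\epsilon_k = +1$; without this parity input, the same argument would produce only a $1/2$ non-vanishing bound, so the whole strength of the corollary rests on squeezing the extra factor of $2$ out of forced even-order vanishing.
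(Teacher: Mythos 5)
Your proposal is correct and follows essentially the same route as the paper: positivity of $f$ under GRH, the forced even order of central vanishing coming from the root number $\epsilon(k)=+1$ (which the paper simply records in Section 2 rather than rederiving), the bound from Theorem \ref{oneleveldensity}, and optimization over Fej\'er-type kernels $f=|g|^2$ with $\widehat{f}(0)/f(0)\to 1/\alpha$ as the support fills out $(-1,1)$. Your intermediate bound, non-vanishing proportion $\geq \frac{5}{4}-\frac{1}{2}\widehat{f}(0)/f(0)+o(1)$, is exactly the paper's inequality $p_{0}(K)\geq \frac{5}{4}-\frac{\widehat{f}(0)}{2}+o(1)$ after the normalization $f(0)=1$.
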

In Section 2 we discuss Hecke characters $\Xi_{k}(\A)$ and their associated $L$-functions.  In Section 3 we compute the Ratios Conjecture recipe for the family $\mathcal{F}(K)$, and then in Section 4 we apply the result to obtain a conjecture for $D_{1}(\mathcal{F}(K);f)$ with a power-savings error term.  This conjecture is expressed via the collection of terms discussed above, which are then explicitly computed in Section 5, resulting in Conjecture \ref{ratios prediction}.  Finally, Section 6 provides a proof of Theorem \ref{oneleveldensity} and of Corollary \ref{nonvanish}.\\
\\
\textbf{Acknowledgments}: We thank Chantal David, Alexei Entin, Daniel Fiorilli, Bingrong Huang, Jared D. Lichtman, Ian Petrow, Anders Södergren, and Zeev Rudnick for helpful discussions.  We also thank the anonymous reviewer, for a careful read-through.  The author was supported by the European Research Council under the European Union's Seventh Framework Programme (FP7/2007-2013) / ERC grant agreement no 320755., as well as by the Czech Science Foundation GA\v CR, grant 17-04703Y, and by a Minerva Post-Doctoral Fellowship.
\section{Hecke Characters}
The notation in this section draws from the general set-ups provided in \cite{RS} and \cite[Ch. 5]{IwanKow}.  Let $\alpha \in \Z[i]$ be an element in the ring of Gaussian integers and let $\theta_{\alpha}$ denote the argument of $\alpha$.  Consider the ideal $\A = \langle \alpha \rangle \subseteq \Z[i]$ generated by $\alpha$.  Since $\Z[i]$ is a principal ideal domain, and the generators of $\A$ differ by multiplication by a unit $\{\pm 1, \pm i\} \in \Z[i]^{\times}$, we find that $\theta_{\A} := \theta_{\alpha}$ is well-defined modulo $\pi/2$.  We may thus fix $\theta_{\A}$ to lie in $[0,\pi/2)$, which corresponds to choosing a generator $\alpha$ that lies within the first quadrant of the complex plane.\\
\\
A rational prime $p \in \Z$ \textit{splits} in $\Z[i]$ if and only if $p \equiv 1(4)$, in which case we factor $\langle p \rangle = \p_{1}\p_{2}$, where $N(\p_{1})=N(\p_{2}) = p$.  The primes $p \equiv 3(4)$ are \textit{inert}, i.e. $\langle p \rangle \subseteq \Z[i]$ remains prime in $\Z[i]$, and $N(\langle p \rangle )=p^{2}$.  The prime $2 = (1+i)^{2}$ is said to \textit{ramify} in $\Z[i]$, and $N(\langle 1+i \rangle)=2$.\\
\\
Consider the Hecke character
\begin{equation*}
\Xi_{k}(\A)= \left(\frac {\alpha}{ \overline{\alpha}}\right)^{2k} = e^{i 4k \theta_{\A}}, \hspace{5mm} k \in \Z,
\end{equation*}
and note that to each such character we may associate a degree 2 $L$-function
\begin{align*}
L_{k}(s) &= \ \sum_{\substack{\A \subseteq \Z[i]\\ \A \neq 0}}\frac{\Xi_{k}(\A)}{N(\A)^{s}}=\prod_{\p \textnormal{ prime}}\left(1-\frac{\Xi_{k}(\p)}{N(\p)^s}\right)^{-1}, \hspace{5mm} \textnormal{Re}(s)>1.
\end{align*}

Moreover, since 
\begin{equation}
\frac{a+bi}{|a+bi|}=\frac{|a-bi|}{a-bi},
\end{equation}
we have that
\begin{equation}
L_{k}(s)=\frac{1}{4}\sum_{\substack{{a,b \in \Z}\\ \{a,b\} \neq \{0,0\}}}\frac{\left(\frac{a+bi}{|a+bi|}\right)^{4k}}{(a^{2}+b^{2})^{s}}=\frac{1}{4}\sum_{\substack{{a,b \in \Z}\\ \{a,b\} \neq \{0,0\}}}\frac{\left(\frac{|a-bi|}{a-bi}\right)^{4k}}{(a^{2}+b^{2})^{s}}=L_{-k}(s)
\end{equation}
for any $k \in \Z$. Alternatively, we may choose to write $L_{k}(s)$ as a product over rational primes, namely
\begin{align*}
L_{k}(s) &=\prod_{p}L(s,k_{p}),
\end{align*}
where
\begin{align*}
L(s,k_{p})^{-1} &:= \left\{
\begin{array}{l l l}
\left(1-\Xi_{k}(\p_{1})p^{-s}\right)\left(1-\Xi_{k}(\p_{2})p^{-s}\right) & \text{ if } p=\p_{1}\p_{2}\equiv 1(4) \\
\big(1-p^{-s}\big)\big(1+p^{-s}\big) & \text{ if } p\equiv 3(4),\\
(1+(-1)^{k+1}2^{-s}) & \text{ if } p=2.
\end{array} \right.
\end{align*}

In terms of the \textit{local roots} $\alpha_{k}(p,j)$, this may be expressed as
\begin{align*}
L(s,k_{p}) = \prod_{j=1}^{2}\left(1-\frac{\alpha_{k}(p,j)}{p^{s}}\right)^{-1},
\end{align*}
where at the ramified prime $p=2$ we understand one of the two local roots to be zero. For Re$(s)>1$, we may then write

\begin{align}\label{Lk log derivative}
\frac{L_{k}'}{L_{k}}(s)&= -\sum_{n=1}^{\infty}\frac{\Lambda(n)a_{k}(n)}{n^{s}}= -\sum_{n=1}^{\infty}\frac{c_{k}(n)}{n^{s}},
\end{align}

where
\begin{equation*}
a_{k}(p^{l}) := \sum_{j=1}^{2}\alpha_{k}(p,j)^{l},
\end{equation*}
and
\begin{equation*}
c_{k}(n) := \Lambda(n)a_{k}(n).
\end{equation*}
Explicitly,
\begin{align*}
a_{k}(p^{l}) &= \left\{
\begin{array}{l l l l}
(-1)^{lk} & \text{ if } p =2\\
0 & \text{ if } p \equiv 3 (4), l \text{ is odd} \\
2 & \text{ if } p \equiv 3(4), l \text{ is even}\\
\Xi^{l}_{k}(\p_{1})+\Xi^{l}_{k}(\p_{2}) & \text{ if }p=\p_{1}\p_{2} \equiv 1(4),
\end{array} \right.
\end{align*}

while

\begin{align}\label{local von mangoldt junk}
c_{k}(p^{l}) &= \left\{
\begin{array}{l l}
(-1)^{lk}\log 2 & \text{ if } p=2 \\
0 & \text{ if } p \equiv 3 (4), l \text{ is odd} \\
2\cdot \log p & \text{ if } p \equiv 3(4), l \text{ is even}\\
(\Xi^{l}_{k}(\p_{1})+\Xi^{l}_{k}(\p_{2}))\log p & \text{ if }p=\p_{1}\p_{2} \equiv 1(4).
\end{array} \right.
\end{align}
Next, we note that $L_{k}(s)$ has conductor $Q_{k}=4$ and gamma factor

\begin{equation*}
L(s,k_{\infty}) = \prod_{j=1}^{2}\Gamma_{\R}\left(s+\kappa_{j}\right),
\end{equation*}
where $\kappa_{1} := |2k|$ and $\kappa_{2}:=|2k|+1$ are the \textit{local parameters} of $L_{k}(s)$ at infinity, and $
\Gamma_{\R}(s):= \pi^{-s/2}\Gamma\left(s/2\right).$  The \textit{completed $L$-function} is then given by

\begin{align*}
\Lambda_{k}(s) &:= Q_{k}^{\frac s 2}\cdot \prod_{j=1}^{2}\Gamma_{\R}\left(s+\kappa_{j}\right)\cdot L_{k}(s)\\
&\phantom{:}=\left(\frac{2}{\pi}\right)^{s} \pi^{-\left(\frac 1 2+|2k|\right)}\Gamma\left(\frac{s+|2k|}{2}\right)\Gamma\left(\frac{s+|2k|+1}{2}\right)L_{k}(s).
\end{align*}

Hecke showed that if $k\neq 0$, these functions have an analytic continuation to the entire complex plane.  The \textit{root number} of $L_{k}(s)$ is $\epsilon(k)=1$, which means that it satisfies the \textit{functional equation} $\Lambda_{k}(s) = \Lambda_{k}(1-s)$.  Moreover, by the Legendre Duplication Formula,
\begin{equation*}
\Gamma(s+|2k|) = \Gamma\left(\frac{s+|2k|}{2}\right)\Gamma\left(\frac{s+|2k|+1}{2}\right)\pi^{-\frac 1 2}2^{s+|2k|-1},
\end{equation*}
which allows us to represent $\Lambda_{k}(s)$ as
\begin{equation*}
\Lambda_{k}(s) = \Gamma(s+|2k|)L_{k}(s)2^{1-|2k|}\pi^{-s-|2k|},
\end{equation*}
and yields a functional equation of the form
\begin{equation}\label{functional equation}
L_{k}(s) = X_{k}(s)L_{k}(1-s),
\end{equation}
where
\begin{equation*}
X_{k}(s) := \pi^{2s-1}\frac{\Gamma(1-s+|2k|)}{\Gamma(s+|2k|)}.
\end{equation*}
$\Lambda_{k}(s)$ has all its zeros in the critical strip $0<\textnormal{Re}(s)<1$ (at the non-trivial zeros of $L_{k}(s)$), and the Generalized Riemann Hypothesis asserts that in fact they all lie on the critical line $\textnormal{Re}(s)=1/2$.
\section{Ratios Conjecture}
\subsection{The Recipe}
The \textit{Ratios Conjecture} described in \cite{Conrey} provides a procedure for calculating the ratio of a product of shifted $L$-functions averaged over a family.  Let $\mathcal{L}$ be an $L$-function, and $\mathcal{F} =\{f\}$ a family of characters with log conductor $c(f)$, as defined in section 3 of \cite{ConreyII}.  $\mathcal{L}(s,f)$ has an approximate functional equation given by
\begin{equation}\label{approx equation}
\mathcal{L}(s,f) = \sum_{n<x}\frac{A_{n}(f)}{n^s}+X(f,s)\sum_{m <y} \frac{\overline{A_{m}(f)}}{m^{1-s}}+remainder.
\end{equation}
Moreover, one may write
\begin{equation} \label{denominator}
\frac{1}{\mathcal{L}(s,f)} = \sum_{n=1}^{\infty}\frac{\mu_{f}(n)}{n^s},
\end{equation}
where the series converges absolutely for Re$(s)>1$.
To conjecture an asymptotic formula for the average
\begin{equation*}
\sum_{f \in \mathcal{F}}\frac{\mathcal{L}(\frac{1}{2} +\alpha,f)}{\mathcal{L}(\frac{1}{2}+ \gamma,f)}
\end{equation*}
the \textit{Ratios Conjecture} suggests the following recipe:
\begin{itemize}
\item Replace the $L$-function in the numerator with the two sums in its approximate functional equation, ignore the remainder term, and allow both sums to extend to infinity.  Replace the $L$-function in the denominator by its series (\ref{denominator}).  Multiply out the resulting expression.
\item Replace the $X(f,s)$ factor by its expected value when averaged over the family.
\item Replace each summand by its expected value when averaged over the family.
\item Call the total $R_{\mathcal{F}}(\alpha,\gamma)$, and let $F = |\mathcal{F}|$.  Then for 
\begin{equation}\label{ratios domain}
-\frac{1}{4}<\textnormal{Re}(\alpha)< \frac{1}{4}, \hspace{5mm}
\frac{1}{\log F}\ll \textnormal{Re}(\gamma),  \hspace{5mm}
\textnormal{Im}(\alpha),\textnormal{Im}(\gamma) \ll_{\epsilon}F^{1-\epsilon},
\end{equation}
the conjecture is that
\begin{equation*}
\sum_{f \in \mathcal{F}}\frac{\mathcal{L}(\frac{1}{2} + \alpha,f)}{\mathcal{L}(\frac{1}{2} + \gamma,f)}=\sum_{f \in \mathcal{F}}R_{\mathcal{F}}(\alpha,\gamma)\left(1+O\left(e^{(-\frac{1}{2}+\epsilon)c(f)}\right)\right),
\end{equation*}
for all $\epsilon > 0$.

\end{itemize}
\subsection{Approximate Functional Equation}
We seek to apply the above procedure to compute the average

\begin{equation}\label{R factor}
R_{K}(\alpha,\gamma)\ = \ \frac{1}{K}\sum_{k=1}^{K} \frac{L_{k}(\frac 1 2+\alpha)}{L_{k}(\frac 1 2+\gamma)},
\end{equation}
for specified values of $\alpha$ and $\gamma$.  As in \cite{SMALL paper}, the approximate functional equation is given by

\begin{equation}\label{approximate func}
L_{k}(s) = \sum_{n<x}\frac{A_{k}(n)}{n^s}+\pi^{2s-1}\cdot \frac{\Gamma(1-s+|2k|)}{\Gamma(s+|2k|)}\sum_{m<y}\frac{\overline{A_{k}}(m)}{m^{1-s}}+remainder,
\end{equation}
where 
\begin{align*}
A_{k}(n) &= \sum_{\substack{N(\A) = n}}\Xi_{k}(\A)
\end{align*}
is a multiplicative function defined explicitly on prime powers as
\begin{equation}\label{coefficients}
A_{k}(p^{l}) = \left\{
\begin{array}{l l}
\sum_{j=-l/2}^{l/2}\Xi^{2j}_{k}(\p) & \text{ if } p \equiv 1(4), l \text{ even}\\
\sum_{j=-(l+1)/2}^{(l-1)/2}\Xi^{2j+1}_{k}(\p) & \text{ if } p \equiv 1(4), l \text{ odd}\\
0 & \text{ if } p \equiv 3(4), l \textnormal{ odd }\\
1 & \text{ if } p \equiv 3(4), l \textnormal{ even } \\
(-1)^{lk} & \text{ if } p = 2.
\end{array} \right.
\end{equation}
Here $\p|p$, and we note that the above formula is independent of our choice of such $\p$.  Ignoring the remainder term, and allowing both terms in (\ref{approximate func}) to sum towards infinity, we write
\begin{equation*}
L_{k}(s) \approx \sum_{n=0}^{\infty}\frac{A_{k}(n)}{n^s}+\pi^{2s-1}\cdot \frac{\Gamma(1-s+|2k|)}{\Gamma(s+|2k|)}\sum_{m=0}^{\infty} \frac{A_{k}(m)}{m^{1-s}},
\end{equation*}
upon noting that $\overline{A_{k}(n)}=A_{k}(n)$ for all $A_{k}(n)$.  Again as in \cite{SMALL paper}, we note that

\begin{equation}\label{inverse series}
\frac{1}{L_k(s)}=\sum\limits_{h} \frac{\mu_{k}(h)}{h^s},
\end{equation}
where

\begin{align}\label{inverse coefficients}
\mu_k(p^h):=
\begin{cases}
1 & h=0\\
-A_{k}(p) & h=1\\
-1 & h=2,p\equiv 3(4)
\\
1 & h=2, p\equiv 1(4)
\\
0 & \textnormal{ otherwise}.
\end{cases}
\end{align}

Multiplying out the resulting expression yields
\begin{align*}
\frac{L_k(\frac{1}{2}+\alpha)}{L_k(\frac{1}{2}+\gamma)} \approx \begin{split}
\prod_{\substack{ p  }}\left(\sum_{n,h}\frac{\mu_{k}(p^h)A_{k}(p^{n})}{p^{h(\frac 1 2+\gamma)+n(\frac 1 2+\alpha)}}\right)+\pi^{2\alpha}\cdot \frac{\Gamma(\frac 1 2-\alpha+|2k|)}{\Gamma(\frac 1 2+\alpha+|2k|)}\prod_{\substack{ p  }}\left(\sum_{m,h}\frac{\mu_{k}(p^h)A_{k}(p^{m})}{p^{h(\frac 1 2+\gamma)+m(\frac 1 2-\alpha)}}\right).
\end{split}
\end{align*}

Next, set
\begin{equation*}
\mathbf{k}:= \frac{1}{2}+|2k|.
\end{equation*}
The algorithm now dictates that we compute an asymptotic for

\begin{equation*}
\bigg\langle \frac{\Gamma(\mathbf{k}-\alpha)}{\Gamma(\mathbf{k}+\alpha)}\bigg\rangle_{K} \hspace{5mm} \textnormal{and} \hspace{5mm}
\bigg \langle \prod_{\substack{ p  }}\left(\sum_{n,h}\frac{\mu_{k}(p^h)A_{k}(p^{n})}{p^{h(\frac 1 2+\gamma)+n(\frac 1 2+\alpha)}}\right)\bigg \rangle_{K},
\end{equation*}

where we denote $\langle \cdot \rangle_{K}$ to refer to the average over all $1 \leq k \leq K$.  
\subsection{Averaging the $\Gamma$-Factors}
\begin{lemma}\label{single gamma}
For $-1/4< \textnormal{Re}(\alpha)< 1/4,$ we have that
\begin{align*}
\bigg\langle \frac{\Gamma(\mathbf{k}-\alpha)}{\Gamma(\mathbf{k}+\alpha)}\bigg\rangle_{K} = \frac{\left(2K\right)^{-2\alpha}}{1-2\alpha} + O_{\alpha}\left(K^{-1}\right)+ O_{\alpha}\left(K^{-1-2\alpha}\right).
\end{align*}
\end{lemma}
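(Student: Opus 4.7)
The plan is to separate the question into two standard steps: (i) replace the ratio of Gamma functions by its Stirling asymptotic, reducing to the average $(1/K)\sum_{k=1}^{K} \mathbf{k}^{-2\alpha}$, and (ii) evaluate that average by comparison with an integral. For step (i), Stirling's formula yields, for fixed $\alpha$ with $|\mathrm{Re}(\alpha)| < 1/4$ and $z \to \infty$ in a right half-plane,
\[
\frac{\Gamma(z - \alpha)}{\Gamma(z + \alpha)} = z^{-2\alpha}\bigl(1 + O_\alpha(|z|^{-1})\bigr).
\]
Specializing to $z = \mathbf{k} = 1/2 + 2k$ (so $\mathbf{k} \asymp k$) gives
\[
\frac{\Gamma(\mathbf{k} - \alpha)}{\Gamma(\mathbf{k} + \alpha)} = \mathbf{k}^{-2\alpha} + O_\alpha\bigl(\mathbf{k}^{-2\mathrm{Re}(\alpha)-1}\bigr).
\]
The error sums to $O_\alpha(1)$ if $\mathrm{Re}(\alpha) > 0$ and to $O_\alpha(K^{-2\mathrm{Re}(\alpha)})$ if $-1/4 < \mathrm{Re}(\alpha) \le 0$; in either case, dividing by $K$ produces contributions absorbed by $O_\alpha(K^{-1}) + O_\alpha(K^{-1-2\alpha})$.

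It then remains to evaluate $S := (1/K)\sum_{k=1}^{K}(1/2 + 2k)^{-2\alpha}$. The relevant antiderivative is
\[
\int_{0}^{K} (1/2 + 2t)^{-2\alpha}\,dt = \frac{(1/2 + 2K)^{1-2\alpha} - (1/2)^{1-2\alpha}}{2(1-2\alpha)},
\]
and Euler--Maclaurin (applied to the real and imaginary parts of $(1/2 + 2t)^{-2\alpha}$ separately, each being monotone) yields
\[
\sum_{k=1}^{K}(1/2+2k)^{-2\alpha} = \int_{0}^{K}(1/2+2t)^{-2\alpha}\,dt + O_\alpha(1) + O_\alpha\bigl(K^{-2\mathrm{Re}(\alpha)}\bigr).
\]
Expanding $(1/2 + 2K)^{1-2\alpha} = (2K)^{1-2\alpha}\bigl(1 + O(1/K)\bigr)$ and dividing by $K$, one finds
\[
S = \frac{(2K)^{-2\alpha}}{1 - 2\alpha} + O_\alpha(K^{-1}) + O_\alpha(K^{-1-2\alpha}),
\]
and combining with the Stirling step gives the claim.

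The argument is entirely routine and I do not expect any substantive obstacle. The only subtlety is the uniformity in $\alpha$: depending on the sign of $\mathrm{Re}(\alpha)$, either the lower or the upper endpoint of the integral $\int_0^K(1/2+2t)^{-2\alpha}\,dt$ (equivalently, either the constant $(1/2)^{1-2\alpha}$ or the power $(2K)^{1-2\alpha}$) dominates the error, which is precisely why the lemma records the two distinct error terms $O_\alpha(K^{-1})$ and $O_\alpha(K^{-1-2\alpha})$ rather than a single one.
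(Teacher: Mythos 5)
Your proposal is correct and follows essentially the same route as the paper: Stirling's approximation to reduce the ratio to $\mathbf{k}^{-2\alpha}(1+O(1/\mathbf{k}))$, then Euler--Maclaurin comparison with $\int(1/2+2t)^{-2\alpha}\,dt$. One tiny caveat: for non-real $\alpha$ the real and imaginary parts of $(1/2+2t)^{-2\alpha}$ are not monotone (they oscillate like $\cos(2\,\mathrm{Im}(\alpha)\log(1/2+2t))$), so you should justify the Euler--Maclaurin step via the error bound $O(|f(1)|+|f(K)|+\int_1^K|f'|)$ with constants depending on $\alpha$, exactly as the paper does, rather than by monotonicity.
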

\begin{proof}
By Stirling's approximation,

\begin{align*}
\begin{split}
\log \frac{\Gamma(\mathbf{k}-\alpha)}{\Gamma(\mathbf{k}+\alpha)}&= (\mathbf{k}-\alpha)\log (\mathbf{k}-\alpha)-(\mathbf{k}+\alpha)\log (\mathbf{k}+\alpha)+2\alpha\\
&\phantom{=}+\frac{1}{2}\left(\log (\mathbf{k}+\alpha)-\log (\mathbf{k}-\alpha)\right)+O\left(\frac{1}{\mathbf{k}}\right). \end{split}
\end{align*}
Taylor expanding the logarithm about $\mathbf{k}$ yields
\begin{equation*}
\log (\mathbf{k}-\alpha)=\log \mathbf{k}-\frac{\alpha}{\mathbf{k}}+O\left(\left(\frac{\alpha}{\mathbf{k}}\right)^{2}\right),
\end{equation*}
so that
\begin{align*}
\begin{split}
\log \frac{\Gamma(\mathbf{k}-\alpha)}{\Gamma(\mathbf{k}+\alpha)} &= (\mathbf{k}-\alpha)\left(\log \mathbf{k}-\frac{\alpha}{\mathbf{k}}\right)-(\mathbf{k}+\alpha)\left(\log \mathbf{k}+\frac{\alpha}{\mathbf{k}}\right)\\
&\phantom{=}+ 2\alpha+\frac{1}{2}\left(\log \mathbf{k}+\frac{\alpha}{\mathbf{k}}-\left(\log \mathbf{k}-\frac{\alpha}{\mathbf{k}}\right)\right)+O\left(\frac{1}{\mathbf{k}}\right)\\
&= -2\alpha\log \mathbf{k} + O\left(\frac{1}{\mathbf{k}}\right),
\end{split}
\end{align*}
and
\begin{align*}
\frac{\Gamma(\mathbf{k}-\alpha)}{\Gamma(\mathbf{k}+\alpha)}& = \mathbf{k}^{-2\alpha}\left(1+O\left(\frac{1}{\mathbf{k}}\right)\right).
\end{align*}

Suppose $f(x)$ is a continuous function for real numbers on the interval $[1,K]$.  Then by the Euler–Maclaurin Formula,
\begin{align*}
\sum_{k=1}^{K}f(k)-\int_{1}^{K}f(x)dx &= \frac{f(1)+f(K)}{2}+\int_{1}^{K}f'(x)\left(x-\lfloor x \rfloor - \frac{1}{2}\right)dx\\
&= O(1)+O(f(K))+O\left(\int_{1}^{K}|f'(x)|dx\right),
\end{align*}
in the limit as $K \rightarrow \infty$.  It follows that

\begin{align*}
\begin{split}
\frac {1}{K} &\sum_{k=1}^{K} \frac{\Gamma(\mathbf{k}-\alpha)}{\Gamma(\mathbf{k}+\alpha)} = \frac {1}{K} \int_1^K\left(\frac{1}{2}+2x\right)^{-2\alpha}dx+O_{\alpha}\left(\frac {1}{K} \int_1^K\left(\frac{1}{2}+2x\right)^{-2\cdot \textnormal{Re}(\alpha)-1}\right)dx\\
&\hspace{25mm} + O_{\alpha}\left(K^{-1}+K^{-2\alpha-1}\right)\\
& = \frac {1}{2K}\left[\frac{\left(\frac{1}{2}+2x\right)^{1-2\alpha}}{1-2\alpha}\right]_1^K +O_{\alpha}\left(\frac{\left(\frac 1 2+2K\right)^{-2 \alpha}}{K}\right) + O_{\alpha}\left(K^{-1}+K^{-2\alpha-1}\right)\\
& = \frac{(2K)^{-2\alpha}}{1-2\alpha}\left(\frac{\frac 1 2+2K}{2K}\right)^{1-2\alpha} + O_{\alpha}\left(K^{-1}+K^{-2\alpha-1}\right)\\
& = \frac{(2K)^{-2\alpha}}{1-2\alpha} + O_{\alpha}\left(K^{-1}+K^{-2\alpha-1}\right),
\end{split}
\end{align*}

as desired.
\end{proof}

\subsection{Averaging the Coefficients}
Define
\begin{equation*}
\delta_{p}(h,n) := \lim_{K \rightarrow \infty} \langle \mu_{k}(p^{h})A_{k}(p^{n}) \rangle_{K},
\end{equation*}
and write
\begin{align*}
\delta_{p}(h,n)\ =\ \begin{cases}
\delta_{3(4)}(h,n)  &\text{ when }p\equiv 3(4)\\
\delta_{1(4)}(h,n) &\text{ when }p\equiv 1(4)\\
\delta_{2}(h,n) &\text{ when }p=2.
\end{cases}
\end{align*}

\begin{lemma}
We have

\begin{align}\label{average 2}
\delta_{2}(h,n) =\left\{
\begin{array}{l l}
1 & \text{ if } h= 0, n \text{ is even}\\
-1 & \text{ if } h=1,  n \text{ is odd}\\
0 & \text{ otherwise},
\end{array} \right.
\end{align}

\begin{align}\label{average 1 mod 4}
\delta_{1(4)}(h,n) =\left\{
\begin{array}{l l}
1 & \text{ if } h= 0, n \text{ is even}\\
-2 & \text{ if } h=1,  n \text{ is odd}\\
1 & \text{ if } h= 2, n \text{ is even}\\
0 & \text{ otherwise},
\end{array} \right.
\end{align}

and

\begin{align}\label{average 3 mod 4}
\delta_{3(4)}(h,n) =\left\{
\begin{array}{l l}
1 & \text{ if } h= 0, n \text{ is even}\\
-1 & \text{ if } h= 2, n \text{ is even}\\
0 & \text{ otherwise}.
\end{array} \right.
\end{align}

\end{lemma}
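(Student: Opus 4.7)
\textbf{Strategy}: The proof separates into three cases according to the splitting behavior of $p$ in $\Z[i]$. In each case I plan to substitute the piecewise definitions (\ref{inverse coefficients}) of $\mu_{k}(p^{h})$ and (\ref{coefficients}) of $A_{k}(p^{n})$, multiply out, and take the limit of the average over $k \in [1,K]$.

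For the ramified prime $p=2$, formula (\ref{inverse coefficients}) kills $\mu_{k}(2^{h})$ unless $h \in \{0,1\}$. A direct substitution gives $\mu_{k}(1)A_{k}(2^{n}) = (-1)^{nk}$ and $\mu_{k}(2)A_{k}(2^{n}) = -(-1)^{(n+1)k}$, and (\ref{average 2}) then follows from the elementary fact that $\frac{1}{K}\sum_{k=1}^{K}(-1)^{mk}$ tends to $1$ for $m$ even and to $0$ for $m$ odd (the odd case being a bounded geometric sum divided by $K$). For the inert primes $p \equiv 3(4)$, both $\mu_{k}(p^{h})$ and $A_{k}(p^{n})$ are \emph{independent of} $k$ in their nonvanishing ranges, so $\delta_{3(4)}(h,n)$ is simply the unaveraged product; the three entries in (\ref{average 3 mod 4}) follow by inspection, with $h=1$ killed by $A_{k}(p) = 0$.

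The main case is $p \equiv 1(4)$. Writing $\Xi_{k}(\p) = e^{4ik\theta_{\p}}$, the product $\mu_{k}(p^{h})A_{k}(p^{n})$ expands as a finite $\Z$-linear combination $\sum_{r}c_{r}\Xi_{k}^{r}(\p)$, and the key analytic input is
\begin{equation*}
\lim_{K\to\infty}\frac{1}{K}\sum_{k=1}^{K}\Xi_{k}^{r}(\p) = \delta_{r,0},
\end{equation*}
which follows from a bounded-geometric-sum estimate once we know $(\alpha/\overline{\alpha})^{2r} \neq 1$ for every $r \neq 0$. The latter is the only nontrivial arithmetic point: if $(\alpha/\overline{\alpha})^{m} = 1$ then $\alpha^{m} = \overline{\alpha}^{m} \in \Z$, yielding an ideal identity $\p^{m} = \overline{\p}^{m}$ that contradicts the fact that $\p$ and $\overline{\p}$ are \emph{distinct} prime ideals above the split rational prime $p$. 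Consequently $\delta_{1(4)}(h,n) = c_{0}$, and each entry of (\ref{average 1 mod 4}) reduces to a short count of the representations of $0$ in the convolution of exponent sets appearing in $\mu_{k}(p^{h})A_{k}(p^{n})$: for $(h,n) = (0, n \textnormal{ even})$ only $j=0$ in $A_{k}(p^{n})$ contributes, giving $c_{0} = 1$; for $(h,n) = (1, n \textnormal{ odd})$ the expansion of $-A_{k}(p)A_{k}(p^{n})$ pairs $\Xi_{k}^{-1}$ with the $j=0$ term $\Xi_{k}^{1}$ and $\Xi_{k}^{1}$ with the $j=-1$ term $\Xi_{k}^{-1}$, producing $c_{0} = -2$; for $(h,n) = (2, n \textnormal{ even})$ the computation is identical to $h=0$ with the sign $\mu_{k}(p^{2}) = +1$; in every remaining case a parity check shows that no exponent in the expansion vanishes, so $c_{0} = 0$.

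The argument is essentially combinatorial bookkeeping once the averaging identity is in hand; accordingly, the only genuine obstacle I anticipate is establishing the non-root-of-unity property of $\alpha/\overline{\alpha}$ cleanly, together with careful tracking of the multiplicity of the $\Xi_{k}^{0}$ term in the $(h,n) = (1, n \textnormal{ odd})$ case, where a miscount would change the $-2$ in (\ref{average 1 mod 4}).
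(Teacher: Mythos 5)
Your proposal is correct and follows essentially the same route as the paper: case analysis on the splitting type, substitution of the explicit formulas for $\mu_{k}(p^{h})$ and $A_{k}(p^{n})$, and reduction to the averaging identity $\lim_{K\to\infty}\frac{1}{K}\sum_{k\leq K}\Xi_{k}^{r}(\p)=\delta_{r,0}$, with the same pairing count yielding the $-2$ in the $(h,n)=(1,\,n\text{ odd})$ case. The only difference is that you explicitly justify the non-root-of-unity property of $\alpha/\overline{\alpha}$ underlying that identity (via $\p\neq\overline{\p}$), a point the paper asserts without proof; this is a harmless strengthening rather than a divergence.
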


\begin{proof}

The effect of $(\ref{inverse coefficients})$ is to limit the choices for $p^{h}$ to $h \in \{0, 1, 2\}$.  First, suppose $p=2$.  If $h=1$ and $n$ is even, then

\[\delta_{2}(h,n) = \lim_{K \rightarrow \infty} \frac{1}{K}\sum_{k=1}^{K} -A_{k}(p)= \lim_{K \rightarrow \infty} \frac{1}{K}\sum_{k=1}^{K} (-1)^{k+1} = 0,\]
while if $h=1$ and $n$ is odd, then
\[\delta_{2}(h,n) = \lim_{K \rightarrow \infty} \frac{1}{K}\sum_{k=1}^{K} -A_{k}(p)A_{k}(p^{n})= \lim_{K \rightarrow \infty} \frac{1}{K}\sum_{k=1}^{K} (-1)^{2k+1} = -1.\]

Proceeding similarly in the remaining cases, we obtain (\ref{average 2}).\\
\\
Next, suppose $p\equiv 1(4)$.  If $n$ is even and $h \in \{0,2\}$, then

\[\delta_{1(4)}(h,n) = \lim_{K \rightarrow \infty} \frac{1}{K}\sum_{k=1}^{K}\sum_{j=-n/2}^{n/2} \Xi_{k}^{2j}(\p) = \sum_{j=-n/2}^{n/2}\lim_{K \rightarrow \infty} \frac{1}{K}\sum_{k=1}^{K} \Xi_{2j}^{k}(\p) = 1,\]
upon noting that for fixed $\p|p$,
\begin{align}
\lim_{K \rightarrow \infty} \frac{1}{K}\sum_{k=1}^{K} \Xi_{2j}^{k}(\p) =\left\{
\begin{array}{l l}
1 & \text{ if } j= 0 \\
0 & \text{ otherwise}.
\end{array} \right.
\end{align}

Similarly, if $n$ is odd and $h \in \{0,2\}$, then

\[\delta_{1(4)}(h,n) = \sum_{j=-(n+1)/2}^{(n-1)/2}\lim_{K \rightarrow \infty} \frac{1}{K}\sum_{k=1}^{K} \Xi_{2j+1}^{k}(\p) = 0.\]
If $h=1$ and $n$ is even,
\begin{align*}
\delta_{1(4)}(h,n)&= \lim_{K \rightarrow \infty} \frac{1}{K}\sum_{k=1}^{K}\sum_{j=-n/2}^{n/2} -A_{k}(\p)\Xi_{k}^{2j}(\p) \\
&= -\lim_{K \rightarrow \infty} \sum_{j=-n/2}^{n/2}\frac{1}{K}\sum_{k=1}^{K} \bigg(\Xi_{-1}^{k}(\p)+\Xi_{1}^{k}(\p)\bigg)\Xi_{2j}^{k}(\p)\\
& = - \sum_{j=-n/2}^{n/2}\lim_{K \rightarrow \infty} \frac{1}{K}\sum_{k=1}^{K}\bigg( \Xi_{2j-1}^{k}(\p)+\Xi_{2j+1}^{k}(\p)\bigg) = 0,
\end{align*}

while if $h=1$ and $n$ is odd,
\begin{align*}
\delta_{1(4)}(h,n)&= -\lim_{K \rightarrow \infty} \sum_{j=-(n+1)/2}^{(n-1)/2}\frac{1}{K}\sum_{k=1}^{K} \bigg(\Xi_{-1}^{k}(\p)+\Xi_{1}^{k}(\p)\bigg)\Xi_{2j+1}^{k}(\p)\\
& = - \sum_{j=-(n+1)/2}^{(n-1)/2}\lim_{K \rightarrow \infty} \frac{1}{K}\sum_{k=1}^{K}\bigg( \Xi_{2j}^{k}(\p)+\Xi_{2j+2}^{k}(\p)\bigg) = -2.
\end{align*}

Calculating the remaining cases, we then obtain (\ref{average 1 mod 4}), and similarly for (\ref{average 3 mod 4}).
\end{proof}

Next, define
\begin{equation*}
G_{p}(\alpha,\gamma):=\sum_{h,n}\frac{\delta_{p}(h,n)}{p^{h(\frac 1 2+\gamma)+n(\frac 1 2+\alpha)}}.
\end{equation*}
If $p \equiv 3(4)$, we write

\begin{align}\label{prime 3(4)}
\begin{split}
G_{p}(\alpha,\gamma)&= \sum_{n \text{ even}}\frac{1}{p^{n(1/2+\alpha)}}-\sum_{n \text{ even}}\frac{1}{p^{1+2\gamma+n(1/2+\alpha)}}\\
&=\sum_{n=0}^{\infty}\left(\frac{1}{p^{n(1+2\alpha)}}-\frac{1}{p^{n(1+2\alpha)+1+2\gamma}}\right)\\
&= \left(1-\frac{1}{p^{1+2\gamma}}\right)\left(1-\frac{1}{p^{1+2\alpha}}\right)^{-1},
\end{split}
\end{align}

while if $p\equiv 1(4)$,

\begin{align}\label{prime 1(4)}
\begin{split}
G_{p}(\alpha,\gamma) &= \sum_{n}\left(\frac{\delta_{p}(0,n)}{p^{n(1/2+\alpha)}}+\frac{\delta_{p}(1,n)}{p^{1/2+\gamma+n(1/2+\alpha)}}+\frac{\delta_{p}(2,n)}{p^{1+2\gamma+n(1/2+\alpha)}}\right)\\
&= \sum_{n=0}^{\infty}\left(\frac{1}{p^{n(1+2\alpha)}}-\frac{1}{p^{n(1+2\alpha)}}\frac{2}{p^{1+\gamma+\alpha}}+\frac{1}{p^{n(1+2\alpha)}}\frac{1}{p^{1+2\gamma}}\right)\\
&= \left(1-\frac{2}{p^{1+\gamma+\alpha}}+\frac{1}{p^{1+2\gamma}}\right)\left(1-\frac{1}{p^{1+2\alpha}}\right)^{-1}.
\end{split}
\end{align}

Lastly, for $p=2$ we obtain

\begin{align}\label{prime 2}
\begin{split}
G_{2}(\alpha,\gamma)&=\sum_{n=0}^{\infty}\left(\frac{1}{2^{n(1+2\alpha)}}-\frac{1}{2^{1+\gamma+\alpha}}\frac{1}{2^{n(1+2\alpha)}}\right)\\
&=\left(1-\frac{1}{2^{1+\gamma+\alpha}}\right)\left(1-\frac{1}{2^{1+2\alpha}}\right)^{-1}.
\end{split}
\end{align}
Combining (\ref{prime 3(4)}), (\ref{prime 1(4)}), and (\ref{prime 2}), we obtain
\begin{align*}
G(\alpha,\gamma)&:=\prod_{p}G_{p}(\alpha,\gamma)\\
&=\left(1-\frac{1}{2^{1+\gamma+\alpha}}\right)\prod_{p\equiv 1(4)}\left(1-\frac{2}{p^{1+\gamma+\alpha}}+\frac{1}{p^{1+2\gamma}}\right)\\
&\phantom{=}\times \prod_{p\equiv 3(4)}\left(1-\frac{1}{p^{1+2\gamma}}\right)\prod_{p}\left(1-\frac{1}{p^{1+2\alpha}}\right)^{-1}.
\end{align*}
We conclude as follows:

\begin{conjecture}\label{ratios conjecture}
With constraints on $\alpha$ and $\gamma$ as described in \textnormal{(\ref{ratios domain})},
\begin{align*}
\begin{split}
R_{K}(\alpha,\gamma)&= G(\alpha,\gamma)+\frac{1}{1-2\alpha}\left(\frac{\pi}{2K}\right)^{2\alpha} G(-\alpha,\gamma)+O(K^{-\frac 1 2+\epsilon}),
\end{split}
\end{align*}
where $G(\alpha,\gamma)$ is as above.
\end{conjecture}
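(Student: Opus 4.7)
The plan is to run through the four bullet points of the Ratios recipe on the expression \eqref{R factor}, using as inputs Lemma~\ref{single gamma} for the archimedean piece together with the local averages $\delta_{p}(h,n)$ from \eqref{average 2}--\eqref{average 3 mod 4} and the Euler factors assembled in \eqref{prime 3(4)}--\eqref{prime 2}.

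First I would insert the approximate functional equation \eqref{approximate func} for $L_{k}(\tfrac{1}{2}+\alpha)$ into the numerator of \eqref{R factor}, discard the remainder, extend both sums to infinity, and replace $1/L_{k}(\tfrac{1}{2}+\gamma)$ by \eqref{inverse series}. Multiplying out splits $R_{K}(\alpha,\gamma)$ into two pieces. The first is
\begin{equation*}
\frac{1}{K}\sum_{k=1}^{K}\prod_{p}\sum_{h,n}\frac{\mu_{k}(p^{h})A_{k}(p^{n})}{p^{h(1/2+\gamma)+n(1/2+\alpha)}},
\end{equation*}
and the second is the same double Euler-product expression with $\alpha$ replaced by $-\alpha$ in the $p^{n}$-exponent, multiplied on the outside by the archimedean factor $\pi^{2\alpha}\Gamma(\mathbf{k}-\alpha)/\Gamma(\mathbf{k}+\alpha)$.

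Next, bullet three of the recipe allows me to pull the $k$-average inside the Euler product, so that each local coefficient $\mu_{k}(p^{h})A_{k}(p^{n})$ is replaced by $\delta_{p}(h,n)$. Treating the three cases $p=2$, $p\equiv 1(4)$, $p\equiv 3(4)$ separately, the resulting local sums are precisely the geometric series performed in \eqref{prime 3(4)}--\eqref{prime 2}, which assemble into $\prod_{p}G_{p}(\alpha,\gamma)=G(\alpha,\gamma)$ for the first piece and into $G(-\alpha,\gamma)$ for the second. For the archimedean prefactor of the second piece, bullet two instructs us to average $\pi^{2\alpha}\Gamma(\mathbf{k}-\alpha)/\Gamma(\mathbf{k}+\alpha)$ separately over $k$; by Lemma~\ref{single gamma} this produces $(\pi/(2K))^{2\alpha}/(1-2\alpha)$ plus an $O_{\alpha}(K^{-1})$ loss that is absorbed into the final error term.

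It remains to record the accuracy. The error $O(K^{-1/2+\epsilon})$ is the prediction supplied by bullet four of the recipe once $c(f)$ is taken to be the log-conductor of $L_{k}$. The genuine obstacle here is conceptual rather than computational: the step of swapping the $k$-average with the infinite Euler product and with the $h,n$-summation is not rigorously justified, so what one obtains is a heuristic prediction whose validity -- in particular the precise size of the error term -- is itself the content of the Ratios Conjecture in the sense of \cite{Conrey,ConreyII}. Unconditional corroboration in the restricted-support regime is supplied later by Theorem~\ref{oneleveldensity}.
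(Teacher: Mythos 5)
Your proposal is correct and follows essentially the same route as the paper: both insert the approximate functional equation \eqref{approximate func} and the inverse series \eqref{inverse series} into \eqref{R factor}, replace the local coefficients by their averages $\delta_{p}(h,n)$ to assemble $G(\alpha,\gamma)$ and $G(-\alpha,\gamma)$ from \eqref{prime 3(4)}--\eqref{prime 2}, and invoke Lemma~\ref{single gamma} for the archimedean factor, with the $O(K^{-1/2+\epsilon})$ error being the recipe's conjectural prediction rather than something proved. Your closing remark that the interchange of the $k$-average with the Euler product is the unjustified (and inherently conjectural) step accurately reflects the status of the statement in the paper.
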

\subsection{Useful Properties of $G(\alpha,\gamma)$}
We next identify several properties of $G(\alpha,\gamma)$ relevant for our applications to the one-level density.  Let $L(s,\chi_{1})$ denote the Dirichlet $L$-function corresponding to the non-principal character $\chi_{1} \in (\Z/4\Z)^{\times}$. Assuming small fixed values of Re$(\alpha)$, Re$(\gamma)$, we factor out all terms which, for fixed $p$, decay substantially slower than $1/p^{2}$, and write
\begin{equation*}
G(\alpha,\gamma) = Y(\alpha,\gamma) \times A(\alpha,\gamma),
\end{equation*}
where
\begin{equation*}
Y(\alpha,\gamma):=\frac{L(1+2\gamma,\chi_{1})\zeta(1+2\alpha)}{ L(1+\alpha+\gamma,\chi_{1})\zeta(1+\alpha+\gamma)},
\end{equation*}
and
\begin{equation*}
A(\alpha,\gamma) := A_{3(4)}(\alpha,\gamma) \times A_{1(4)}(\alpha,\gamma),
\end{equation*}
where
\begin{equation*}
A_{3(4)}(\alpha,\gamma) := \prod_{p \equiv 3(4)}\frac{\left(1-\frac{1}{p^{1+2\gamma}}\right)\left(1+\frac{1}{p^{1+2\gamma}}\right)}{\left(1-\frac{1}{p^{1+\alpha+\gamma}}\right)\left(1+\frac{1}{p^{1+\alpha+\gamma}}\right)},
\end{equation*}
and
\begin{equation*}
A_{1(4)}(\alpha,\gamma) := \prod_{p \equiv 1(4)}\frac{\left(1-\frac{2}{p^{1+\alpha+\gamma}}+\frac{1}{p^{1+2\gamma}}\right)\left(1-\frac{1}{p^{1+2\gamma}}\right)}{\left(1-\frac{1}{p^{1+\alpha+\gamma}}\right)^{2}}.
\end{equation*}
In particular, note that $A(\alpha,\gamma)$ is differentiable at the point $\alpha = \gamma =0$.

\begin{lemma}\label{A derivative}
We have
\[\frac{\partial}{\partial \alpha} A(\alpha,\gamma)\bigg \vert_{\alpha = \gamma = r} = -2 \sum_{p \equiv 3(4)}\frac{\log p}{p^{2+4r}-1}\]
\end{lemma}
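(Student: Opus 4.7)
The plan is to exploit the fact that $A(\alpha,\gamma)$ factorizes as $A(\alpha,\gamma)=A_{3(4)}(\alpha,\gamma)\cdot A_{1(4)}(\alpha,\gamma)$ and to show that on the diagonal $\alpha=\gamma$ both Euler factors simplify dramatically, so that $A(r,r)=1$ and the logarithmic derivative reduces to a single tractable sum.

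First, I would substitute $\alpha=\gamma=r$ into each Euler factor. For $p\equiv 1(4)$, the numerator $(1-2p^{-1-2r}+p^{-1-2r})(1-p^{-1-2r})=(1-p^{-1-2r})^{2}$ exactly cancels the denominator $(1-p^{-1-2r})^{2}$, so the local factor is $1$. For $p\equiv 3(4)$, the numerator and denominator are identical after setting $\alpha+\gamma=2\gamma=2r$, so again the local factor is $1$. Hence $A(r,r)=1$, which has the pleasant consequence that
\[
\frac{\partial A}{\partial\alpha}\bigg|_{\alpha=\gamma=r}
=\frac{\partial}{\partial\alpha}\log A(\alpha,\gamma)\bigg|_{\alpha=\gamma=r}
=\sum_{p}\frac{\partial}{\partial\alpha}\log\frac{N_{p}(\alpha,\gamma)}{D_{p}(\alpha,\gamma)}\bigg|_{\alpha=\gamma=r}.
\]

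Next, I would compute the contribution from each congruence class separately. For $p\equiv 1(4)$, differentiating with respect to $\alpha$ (noting that only the exponent $1+\alpha+\gamma$ depends on $\alpha$, while $1+2\gamma$ does not) yields the two terms
\[
\frac{2\log p\cdot p^{-1-\alpha-\gamma}}{1-2p^{-1-\alpha-\gamma}+p^{-1-2\gamma}}
\quad\text{and}\quad
-\frac{2\log p\cdot p^{-1-\alpha-\gamma}}{1-p^{-1-\alpha-\gamma}},
\]
(the latter coming from the square in the denominator). On setting $\alpha=\gamma=r$, both denominators collapse to $1-p^{-1-2r}$, so the two terms cancel and the entire $A_{1(4)}$ contribution vanishes.

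The only work left is the $p\equiv 3(4)$ contribution. Writing the local factor as
\[
\frac{(1-p^{-1-2\gamma})(1+p^{-1-2\gamma})}{(1-p^{-1-\alpha-\gamma})(1+p^{-1-\alpha-\gamma})},
\]
differentiating in $\alpha$ gives
\[
-\frac{\log p\cdot p^{-1-\alpha-\gamma}}{1-p^{-1-\alpha-\gamma}}
+\frac{\log p\cdot p^{-1-\alpha-\gamma}}{1+p^{-1-\alpha-\gamma}}
=\log p\cdot p^{-1-\alpha-\gamma}\cdot\frac{-2p^{-1-\alpha-\gamma}}{1-p^{-2-2(\alpha+\gamma)}}.
\]
Setting $\alpha=\gamma=r$ turns the exponent into $2+4r$ and yields $-2\log p/(p^{2+4r}-1)$ per prime. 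Summing over $p\equiv 3(4)$ produces the desired identity. The main (and only mildly nontrivial) step is the algebraic simplification for $p\equiv 1(4)$: verifying that the polynomial identity $1-2x+x=1-x$ forces $A_{1(4)}(r,r)=1$ and, more importantly, makes its $\alpha$-derivative vanish on the diagonal.
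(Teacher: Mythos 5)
Your proposal is correct and follows essentially the same route as the paper: both arguments reduce to differentiating each local Euler factor separately, using that every local factor equals $1$ on the diagonal $\alpha=\gamma=r$ (so your logarithmic differentiation and the paper's direct product rule coincide), observing that the $p\equiv 1\ (4)$ contributions cancel there, and evaluating the $p\equiv 3\ (4)$ local derivative to get $-2\log p/(p^{2+4r}-1)$. The algebraic details you give all check out.
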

\begin{proof}
Let
\[B_{p}(\alpha,\gamma):= \frac{\left(1-\frac{1}{p^{1+2\gamma}}\right)\left(1+\frac{1}{p^{1+2\gamma}}\right)}{\left(1-\frac{1}{p^{1+\alpha+\gamma}}\right)\left(1+\frac{1}{p^{1+\alpha+\gamma}}\right)},\]

and note that 
\[\frac{\partial }{\partial \alpha}B_{p}(\alpha,\gamma) = \frac{\log p}{p^{1+\alpha+\gamma}}\left(\frac{\left(1-\frac{1}{p^{1+2\gamma}}\right)\left(1+\frac{1}{p^{1+2\gamma}}\right)}{\left(1-\frac{1}{p^{1+\alpha+\gamma}}\right)\left(1+\frac{1}{p^{1+\alpha+\gamma}}\right)^{2}}-\frac{\left(1-\frac{1}{p^{1+2\gamma}}\right)\left(1+\frac{1}{p^{1+2\gamma}}\right)}{\left(1-\frac{1}{p^{1+\alpha+\gamma}}\right)^{2}\left(1+\frac{1}{p^{1+\alpha+\gamma}}\right)}\right).\]
It follows that
\begin{align*}
\frac{\partial }{\partial \alpha}A_{3(4)}(\alpha,\gamma)\bigg \vert_{\alpha = \gamma = r} &= \sum_{p \equiv 3(4)}\prod_{\substack{q \neq p \\ q \equiv 3(4)}}B_{q}(\alpha,\gamma)\frac{\partial }{\partial \alpha}B_{p}(\alpha,\gamma)\bigg \vert_{\alpha = \gamma = r}\\
&= \sum_{p \equiv 3(4)}\frac{\partial }{\partial \alpha}B_{p}(\alpha,\gamma)\bigg \vert_{\alpha = \gamma = r}\\
&=  \sum_{p \equiv 3(4)}\frac{\log p}{p^{1+2r}}\left(\left(1+\frac{1}{p^{1+2r}}\right)^{-1}-\left(1-\frac{1}{p^{1+2r}}\right)^{-1}\right)\\
&=  -2\sum_{p \equiv 3(4)}\frac{\log p}{p^{2+4r}-1}.
\end{align*}
Similarly, we find that
\begin{align*}
\frac{\partial }{\partial \alpha}A_{1(4)}(\alpha,\gamma)\bigg \vert_{\alpha = \gamma = r} &= 2\sum_{p \equiv 1(4)}\frac{\log p}{p^{1+2r}}\Bigg(\left(1 - \frac{1}{p^{1+2r}}\right)^{-1}-\left(1 - \frac{1}{p^{1+2r}}\right)^{-1}\Bigg)=0.
\end{align*}

Since $A_{3(4)}(r,r) = A_{1(4)}(r,r) = 1$, it follows that

\begin{align*}
\frac{\partial }{\partial \alpha}A(\alpha,\gamma)\bigg \vert_{\alpha=\gamma=r} &= \frac{\partial }{\partial \alpha}A_{3(4)}(\alpha,\gamma)\bigg \vert_{\alpha=\gamma=r}+\frac{\partial }{\partial \alpha}A_{1(4)}(\alpha,\gamma)\bigg \vert_{\alpha=\gamma=r}\\
&= -2\sum_{p \equiv 3(4)}\frac{ \log p}{p^{2+4r}-1}.
\end{align*}
\end{proof}
\begin{lemma}\label{A derivative at zero}
We have 
\[\frac{d}{d \alpha}A(-\alpha,\alpha)\bigg\vert_{\alpha=0}=4\sum_{p \equiv 3(4)}\frac{\log p}{p^{2}-1}\]
\end{lemma}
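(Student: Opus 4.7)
The plan is to apply the chain rule and then reduce to a computation analogous to the one carried out in Lemma \ref{A derivative}. Writing $F(\alpha) := A(-\alpha,\alpha)$ and using subscripts to denote partial derivatives in the first and second slots,
\[
F'(0) \;=\; -\,\frac{\partial A}{\partial \alpha}(0,0) \;+\; \frac{\partial A}{\partial \gamma}(0,0).
\]
The first term is exactly what Lemma \ref{A derivative} evaluates at $r=0$, yielding
$\frac{\partial A}{\partial \alpha}(0,0) = -2\sum_{p\equiv 3(4)}\frac{\log p}{p^{2}-1}$. So it only remains to establish $\frac{\partial A}{\partial \gamma}(0,0) = 2\sum_{p\equiv 3(4)}\frac{\log p}{p^{2}-1}$, after which the two contributions add to give the claimed $4\sum_{p\equiv 3(4)}\frac{\log p}{p^{2}-1}$.

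To compute $\frac{\partial A}{\partial \gamma}(0,0)$, I factor $A = A_{3(4)} \cdot A_{1(4)}$. Since $A_{3(4)}(0,0) = A_{1(4)}(0,0) = 1$, the product rule gives $\frac{\partial A}{\partial \gamma}(0,0) = \frac{\partial A_{3(4)}}{\partial \gamma}(0,0) + \frac{\partial A_{1(4)}}{\partial \gamma}(0,0)$, and since each local Euler factor also equals $1$ at $\alpha = \gamma = 0$, each of these reduces to a sum over primes of partial derivatives of a single local factor (precisely as in the proof of Lemma \ref{A derivative}).

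For the $3(4)$ part, observe that the local factor simplifies to
\[
B_{p}(\alpha,\gamma) \;=\; \frac{1 - p^{-2-4\gamma}}{1 - p^{-2-2(\alpha+\gamma)}},
\]
and a direct logarithmic differentiation shows $\frac{\partial B_{p}}{\partial \gamma}(0,0) = \frac{2\log p}{p^{2}-1}$. Summing over $p\equiv 3(4)$ gives the desired contribution.

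For the $1(4)$ part, I expect the contribution to vanish, mirroring the vanishing already observed for $\frac{\partial A_{1(4)}}{\partial \alpha}$ in Lemma \ref{A derivative}. Writing the local factor as $C_{p}(\alpha,\gamma)$, the logarithmic derivative in $\gamma$ breaks into three pieces: (i) the piece coming from $\log(1 - 2p^{-1-\alpha-\gamma} + p^{-1-2\gamma})$, whose $\gamma$-derivative is proportional to $2p^{-1-\alpha-\gamma}\log p - 2p^{-1-2\gamma}\log p$ and vanishes at $\alpha = \gamma = 0$; (ii) the piece from $\log(1 - p^{-1-2\gamma})$, contributing $\tfrac{2\log p}{p-1}$ at the origin; and (iii) the piece from $-2\log(1 - p^{-1-\alpha-\gamma})$, contributing $-\tfrac{2\log p}{p-1}$ at the origin. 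The latter two cancel, so $\frac{\partial A_{1(4)}}{\partial \gamma}(0,0) = 0$.

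Combining everything yields the identity, with the only real obstacle being the careful bookkeeping of signs and the cancellation in the $A_{1(4)}$ calculation; otherwise the proof is structurally identical to that of Lemma \ref{A derivative}.
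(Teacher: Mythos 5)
Your argument is correct, but it reaches the identity by a different route than the paper. You apply the two-variable chain rule, writing $\frac{d}{d\alpha}A(-\alpha,\alpha)\big\vert_{\alpha=0} = -\frac{\partial A}{\partial\alpha}(0,0) + \frac{\partial A}{\partial\gamma}(0,0)$, recycle Lemma \ref{A derivative} (at $r=0$) for the first partial, and compute the $\gamma$-partial from scratch; your local computations are all accurate (in particular $\frac{\partial B_p}{\partial\gamma}(0,0) = \frac{4\log p}{p^2-1} - \frac{2\log p}{p^2-1} = \frac{2\log p}{p^2-1}$ for $p\equiv 3(4)$, and the exact cancellation of the $\pm\frac{2\log p}{p-1}$ contributions in the split-prime factor), so the two halves each contribute $2\sum_{p\equiv 3(4)}\frac{\log p}{p^2-1}$ and sum to the stated $4\sum_{p\equiv 3(4)}\frac{\log p}{p^2-1}$. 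The paper instead substitutes $\alpha=-r$, $\gamma=r$ \emph{first}, so that each Euler factor becomes an explicit one-variable function of $r$ (e.g. $\frac{1-p^{-2-4r}}{1-p^{-2}}$ for $p\equiv 3(4)$), and then differentiates directly; this keeps the proof self-contained and makes the vanishing of the $1\bmod 4$ contribution visible in closed form as $-\frac{4(p^{2r}-1)\log p}{p^{4r}(p-1)^2}$. Your version buys economy by reusing the earlier lemma, at the small cost of invoking total differentiability of the two-variable infinite product at the origin to justify the chain rule --- which the paper does assert, and which holds since the products converge locally uniformly and define holomorphic functions near $(0,0)$, so there is no gap.
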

\begin{proof}
Note that
\[\frac{d }{d r }A_{3(4)}(-r,r) = \frac{d }{d r }\prod_{p \equiv 3(4)}\frac{\left(1-\frac{1}{p^{1+2r}}\right)\left(1+\frac{1}{p^{1+2r}}\right)}{\left(1-\frac{1}{p}\right)\left(1+\frac{1}{p}\right)} = \sum_{p \equiv 3(4)}\frac{4\log p}{p^{4r}(p^{2}-1)},\]
while
\begin{align*}
\frac{d }{d r} A_{1(4)}(-r,r)&=\frac{d }{d r}\prod_{p \equiv 1(4)}\frac{\left(1-\frac{2}{p}+\frac{1}{p^{1+2r}}\right)\left(1-\frac{1}{p^{1+2r}}\right)}{\left(1-\frac{1}{p}\right)^{2}}\\
& = \sum_{p \equiv 1(4)}-\frac{4(p^{2r}-1)\log p}{p^{4r}(p-1)^{2}}.
\end{align*}

It follows that
\begin{align*}
\frac{d}{d r}A(-r,r)\bigg \vert_{r=0}&= \frac{d}{d r}A_{3(4)}(-r,r)\bigg \vert_{r=0}+\frac{d}{d r}A_{1(4)}(-r,r)\bigg \vert_{r=0}\\
&=4\sum_{p \equiv 3(4)}\frac{\log p}{p^{2}-1}.
\end{align*}
\end{proof}
\subsection{Taking the Derivative}
Let $R_{K}(\alpha,\gamma)$ be as in (\ref{R factor}). Since differentiation operates linearly on finite sums,
upon setting $\gamma = \alpha=r$ we obtain
\begin{align}\label{ratiostoder}
\begin{split}
\frac{1}{K}\sum_{k=1}^{K}\frac{L_{k}'}{L_{k}}\left(\frac{1}{2}+r\right)=\frac{\partial}{\partial \alpha}R_{K}(\alpha,\gamma)\bigg \vert_{\gamma = \alpha=r} .
\end{split}
\end{align}

By Conjecture \ref{ratios conjecture},
\begin{align*}
\begin{split}
\frac{\partial}{\partial \alpha}R_{K}(\alpha;\gamma)\bigg|_{\alpha=\gamma=r}&= \frac{1}{K}\sum_{k=1}^{K}\Bigg(\frac{\partial}{\partial \alpha}G(\alpha,\gamma)+\frac{\partial }{\partial \alpha}\frac{1}{1-2\alpha}\left(\frac{\pi}{2K}\right)^{2\alpha} G(-\alpha,\gamma)\Bigg)\bigg|_{\alpha=\gamma=r}
\end{split}
\end{align*}
up to an error term, and moreover, upon noting that $A(r,r)=1$,
\begin{equation*}
\frac{\partial }{\partial \alpha}G(\alpha,\gamma)\bigg \vert_{\alpha= \gamma = r}=\frac{\zeta'}{\zeta}(1+2r)-\frac{L'}{L}(1+2r,\chi_{1})+A'(r,r)
\end{equation*}
where $A'(r,r):= \frac{\partial }{\partial \alpha}A(\alpha,\gamma)\bigg \vert_{\alpha= \gamma = r}$. Similarly,
\begin{align*}
\begin{split}
\frac{\partial }{\partial \alpha}\frac{1}{1-2\alpha}\left(\frac{\pi}{2K}\right)^{2\alpha} G(-\alpha,\gamma)\bigg \vert_{\alpha= \gamma = r}&= \frac{-1}{1-2r}\left(\frac{\pi}{2K}\right)^{2r}\frac{L(1+2r,\chi_{1})}{L(1,\chi_{1})} \zeta(1-2r)A(-r,r).
\end{split}
\end{align*}

The error terms in Conjecture \ref{ratios conjecture} is believed to be uniform in $\alpha$ and $\gamma$ (across the ranges permitted by (\ref{ratios domain})), and we expect that the results remain valid with the same range and error terms upon differentiating with respect to $\alpha$ \cite[Remark 2.4]{ConreySnaith}.  We therefore conclude as follows.
\begin{theorem}\label{log der ratios}
Let $1/\log K < \textnormal{Re}(r) < 1/4$ and $\textnormal{Im}(r)\ll_{\epsilon}K^{1-\epsilon}$.  Assume Conjecture \ref{ratios conjecture}, and that the error term remains invariant under differentiation with respect to $\alpha$.  We then have that
\begin{align*}
&\frac{1}{K}\sum_{k=1}^{K}\frac{L'_{k}}{L_{k}}\left(\frac 1 2+r\right)= \frac{\zeta'}{\zeta}(1+2r)+A'(r,r)-\frac{L'}{L}(1+2r,\chi_{1})\\
&-\frac{1}{1-2r}\left(\frac{\pi}{2K}\right)^{2r}\frac{L(1+2r,\chi_{1})}{L(1,\chi_{1})} \zeta(1-2r)A(-r,r)+O(K^{-\frac{1}{2}+\epsilon}).
\end{align*}
\end{theorem}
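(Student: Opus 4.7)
The plan is to start from identity (\ref{ratiostoder}), which expresses the left hand side as $\partial_\alpha R_K(\alpha,\gamma)\big|_{\alpha=\gamma=r}$, and then differentiate the asymptotic supplied by Conjecture \ref{ratios conjecture} term by term. Under the hypothesis stated in the theorem (justified in analogy with \cite[Remark 2.4]{ConreySnaith}) that the $O(K^{-1/2+\epsilon})$ error survives termwise $\alpha$-differentiation, it suffices to compute
\[
\frac{\partial}{\partial \alpha} G(\alpha,\gamma)\bigg|_{\alpha=\gamma=r} \qquad \textnormal{and} \qquad \frac{\partial}{\partial \alpha}\left[\frac{1}{1-2\alpha}\left(\frac{\pi}{2K}\right)^{2\alpha} G(-\alpha,\gamma)\right]\bigg|_{\alpha=\gamma=r}.
\]

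For the first expression I would use the factorization $G = Y \cdot A$ from the previous subsection together with the easily verified identities $Y(r,r) = A(r,r) = 1$. A Leibniz expansion reduces the computation to $\partial_\alpha Y|_{\alpha=\gamma=r} + A'(r,r)$. Logarithmic differentiation gives $\partial_\alpha \log Y = 2\zeta'/\zeta(1+2\alpha) - L'/L(1+\alpha+\gamma,\chi_1) - \zeta'/\zeta(1+\alpha+\gamma)$, and specializing to $\alpha=\gamma=r$ collapses this to $\zeta'/\zeta(1+2r) - L'/L(1+2r,\chi_1)$. Combined with $A'(r,r)$ this yields the first half of the stated formula.

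The second expression is more delicate, and I expect this to be the main obstacle. The subtlety is that the factor $1/\zeta(1-\alpha+\gamma)$ appearing inside $Y(-\alpha,\gamma)$ vanishes at $\alpha=\gamma=r$, since $1/\zeta(1) = 0$; consequently $G(-\alpha,\gamma)$ itself vanishes at this point. This forces the two terms coming from differentiating the prefactors $(1-2\alpha)^{-1}$ and $(\pi/2K)^{2\alpha}$ to drop out, so the entire contribution comes from $\partial_\alpha G(-\alpha,\gamma)$. Writing $Y(-\alpha,\gamma) = F(\alpha,\gamma)/\zeta(1-\alpha+\gamma)$ with $F(\alpha,\gamma) = L(1+2\gamma,\chi_1)\zeta(1-2\alpha)/L(1-\alpha+\gamma,\chi_1)$ regular at $\alpha=\gamma=r$, and using the Laurent expansion $1/\zeta(s) = (s-1) - \gamma_0(s-1)^2 + O((s-1)^3)$ near $s=1$ to obtain $(1/\zeta)'(1) = 1$, the chain rule contributes a factor of $-1$. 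The remaining regular factors evaluate to $F(r,r) \cdot A(-r,r) = \zeta(1-2r) L(1+2r,\chi_1)/L(1,\chi_1) \cdot A(-r,r)$.

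Multiplying through by the prefactor $(1-2r)^{-1}(\pi/2K)^{2r}$ produces the second half of the formula, complete with its minus sign. Assembling the two contributions and invoking the error hypothesis completes the argument. The only non-routine step is recognizing the interplay between the zero of $G(-\alpha,\gamma)$ and the pole of $\zeta$ at $s=1$ under differentiation; a naive evaluation of the second piece would miss the finite nonzero limit that ultimately supplies the dominant transition term in Theorem \ref{log der ratios}.
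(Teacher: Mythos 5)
Your proposal is correct and follows essentially the same route as the paper: differentiate the two terms of Conjecture \ref{ratios conjecture}, use $Y(r,r)=A(r,r)=1$ and logarithmic differentiation for the first piece, and exploit the vanishing of $G(-\alpha,\gamma)$ at $\alpha=\gamma=r$ (via the zero of $1/\zeta$ at $s=1$, with $(1/\zeta)'(1)=1$ and the chain-rule sign from $\partial_\alpha(1-\alpha+\gamma)=-1$) for the second. In fact you spell out the computation of the second piece in more detail than the paper, which merely states the result with ``similarly.''
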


We conclude this section with the following technical lemma:

\begin{lemma}\label{fourierdecay}
Let $\widehat{f}$ be a smooth compactly supported function.  Then $f(\sigma+it)$ decays rapidly on horizontal strips, i.e. for fixed $t \in \R$,
\begin{equation}\label{rapiddecay}
f(\sigma+it) \ll_{f,n,t} \text{min}(1,|\sigma|^{-n})
\end{equation}
for any given $n > 0$.

\end{lemma}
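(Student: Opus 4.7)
The plan is to use the defining inverse Fourier relation $f(u) = \int_{\R}\widehat{f}(r)e^{2\pi i r u}dr$ directly, analytically continued to complex argument $u = \sigma+it$. Because $\widehat{f}$ is smooth and compactly supported, say $\mathrm{supp}(\widehat{f}) \subseteq [-A,A]$, the integral
\[
f(\sigma+it) = \int_{-A}^{A}\widehat{f}(r)\,e^{-2\pi r t}\,e^{2\pi i r \sigma}\,dr
\]
converges absolutely and defines an entire function of $s = \sigma+it$. The crude bound $|e^{-2\pi r t}| \le e^{2\pi A|t|}$ on the compact support already yields $|f(\sigma+it)| \le 2A\,\|\widehat{f}\|_\infty\, e^{2\pi A|t|}$, which is the $O(1)$ half of the claim for fixed $t$.

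For the decay in $|\sigma|$, I would integrate by parts $n$ times in the variable $r$, using $e^{2\pi i r\sigma} = (2\pi i \sigma)^{-1}\frac{d}{dr}e^{2\pi i r\sigma}$. Smoothness of $\widehat{f}$ ensures each derivative $\frac{d^{j}}{dr^{j}}\bigl[\widehat{f}(r)e^{-2\pi r t}\bigr]$ exists and is continuous on $[-A,A]$, while compact support guarantees that every boundary term vanishes. After $n$ iterations one obtains
\[
f(\sigma+it) = \frac{1}{(2\pi i \sigma)^{n}}\int_{-A}^{A}\frac{d^{n}}{dr^{n}}\!\left[\widehat{f}(r)\,e^{-2\pi r t}\right]e^{2\pi i r \sigma}\,dr,
\]
and bounding the remaining integrand trivially gives $|f(\sigma+it)| \ll_{f,n,t} |\sigma|^{-n}$. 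Combining this with the $O(1)$ bound above yields the stated $\min(1,|\sigma|^{-n})$ estimate.

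This is essentially the Paley–Wiener theorem applied on a horizontal strip; the only mild subtlety is keeping track of how the $t$-dependence enters, but since $t$ is fixed the exponential factor $e^{2\pi A |t|}$ and the bounds on derivatives of $\widehat{f}(r)e^{-2\pi r t}$ are all absorbed into the implicit constant. I do not anticipate a genuine obstacle: everything reduces to dominated convergence plus repeated integration by parts on a compact interval.
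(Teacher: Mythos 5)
Your proof is correct and is essentially the same argument as the paper's: repeated integration by parts in $r$, using the compact support of $\widehat{f}$ to kill boundary terms and its smoothness to bound the resulting integrals, yielding the factor $|\sigma|^{-n}$. The only cosmetic difference is that the paper integrates by parts against $e^{2\pi i r s}$ (producing $1/s$ factors) while you split off $e^{-2\pi r t}$ and integrate against $e^{2\pi i r\sigma}$ (producing $1/\sigma$ factors); since $|s|\geq|\sigma|$ and $t$ is fixed, both give the stated bound.
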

\begin{proof}
Set $s:= \sigma+it$.  Upon applying integration by parts, we find that
\begin{align*}
f(s) = \int_{\R}\widehat{f}(r)e^{2\pi i r s} dr &= \widehat{f}(r)\frac{e^{2 \pi i r s}}{2\pi i s}\bigg|_{- \infty}^{\infty} - \int_{\R}\widehat{f}^{(1)}(r)\frac{e^{2 \pi i r s}}{2\pi i s}dr\\
&= - \frac{1}{2\pi i s}\int_{\R}\widehat{f}^{(1)}(r)e^{2 \pi i r s}dr.
\end{align*}
Similarly,
\begin{align*}
\int_{\R}\widehat{f}^{(1)}(r)e^{2 \pi i r s}dr &= \widehat{f}^{(1)}(r)\frac{e^{2 \pi i r s}}{2\pi i s}\bigg|_{- \infty}^{\infty} - \int_{\R}\widehat{f}^{(2)}(r)\frac{e^{2 \pi i r s}}{2\pi i s}dr\\
&= - \frac{1}{2\pi i s}\int_{\R}\widehat{f}^{(2)}(r)e^{2 \pi i r s}dr.
\end{align*}
Proceeding in this manner, we find that
\begin{align*}
f(s) &= \left(\frac{-1}{2\pi i s}\right)^{n}\int_{\R}\widehat{f}^{(n)}(r)e^{2 \pi i r s}dr
\end{align*}
for any $n \in \N$.  Since $\widehat{f}^{(n)}(r)$ is compactly supported, we bound
\begin{align*}
\int_{\R}\widehat{f}^{(n)}(r)e^{2 \pi i r (\sigma+it)}dr & = O(1),\\
\end{align*}
in the limit as $\sigma \rightarrow \infty$.  It follows that
\begin{equation}
f(\sigma+it) \ll \text{min}(1,|\sigma|^{-n}),
\end{equation}

where the implicit constant now depends on $f,n,$ and $t$.  The lemma then follows upon noting that this holds for any $n \in \N$.
\end{proof}
\section{One-Level Density Using the Ratios Conjecture}
In this section, we use the Ratios Conjecture to compute $D_{1}(\mathcal{F}(K);f)$.  Let $f$ be an even Schwarz function that is extended holomorphically to the strip $|$Im$(z)|<2$, and recall that $M= \log K$.  By an application of Cauchy's residue theorem (and the assumption of GRH), we find that
\begin{equation}\label{cauchy one level}
D_{1}(\mathcal{F}(K);f) = \frac{1}{2\pi i}\frac{1}{K}\sum_{k=1}^{K}\left(\int_{(c)}-\int_{(1-c)}\right)\frac{L'_{k}}{L_{k}}(s)\cdot f\left(\frac{-iM}{\pi}\left(s-\frac 1 2\right)\right)ds,
\end{equation}
where $\textnormal{Re}(s)=c$, and we chose $1/2+M^{-1} < c < 3/4$.  Here $(c)$ denotes the vertical line from $c-i\infty$ to $c+i \infty$.\\
\\
For the integral along the line $(1-c)$ we apply the change of variables $s \mapsto 1-s$, and find that this integral equals 
\begin{equation*}
\frac{1}{2\pi i}\frac{1}{K}\sum_{k=1}^{K}\int_{(c)}f\left(\frac{i M}{\pi}\left(s-\frac{1}{2}\right)\right)\frac{L'_{k}}{L_{k}}\left(1-s\right)ds.
\end{equation*}

By (\ref{functional equation}), we may moreover write

\begin{equation}\label{functional X}
\frac{L_{k}'}{L_{k}}(1-s) = \frac{X_{k}'}{X_{k}}(s)-\frac{L_{k}'}{L_{k}}(s),
\end{equation}
where
\begin{equation}\label{logarithmic derivative func}
\frac{X'_{k}}{X_{k}}(s) = 2 \textnormal{ log } \pi-\frac{\Gamma'}{\Gamma}\left(1-s+|2k|\right)-\frac{\Gamma'}{\Gamma}\left(s+|2k|\right).
\end{equation}
Changing the order of summation and integration and applying the change of variables $s = 1/2+r$, we then obtain
\begin{equation}\label{one level}
D_{1}(\mathcal{F}(K);f) = \frac{1}{2\pi i}\int_{(c')}\frac{1}{K}\sum_{k=1}^{K}\Bigg(2\frac{L'_{k}}{L_{k}}\left(\frac 1 2+r\right)-\frac{X'_{k}}{X_{k}}\left(\frac 1 2+r\right)\Bigg)f\left(\frac{i M r}{\pi}\right)dr,
\end{equation}
where Re$(r) = c-1/2 = c'$, and where we recall that $f$ is even.  When Im$(r)> |K|^{1-\epsilon}$, then by Lemma \ref{fourierdecay}, Stirling's approximation, and upper bounds on the growth of $L'_{k}/L_{k}$ within the critical strip, we bound the tail of this integral by $O_{\epsilon}\left(K^{\epsilon-1}\right)$.  For Im$(r) < K^{1-\epsilon}$, we apply Theorem \ref{log der ratios}, and a similar argument as above to bound the tail, to then obtain

\begin{align*}
\begin{split}
D_{1}(\mathcal{F}(K);f) &=\frac{-1}{2\pi i}\int_{(c')}f\left(\frac{i M r}{\pi}\right)\Bigg(\bigg \langle\frac{X'_{k}}{X_{k}}\left(\frac 1 2 +r\right)\bigg \rangle_{K}-2\frac{\zeta'}{\zeta}(1+2r)\\
&\phantom{=}+2\frac{L'}{L}(1+2r,\chi_{1})-2A'(r,r)\\
&\phantom{=}+2\frac{A(-r,r)}{1-2r}\left(\frac{\pi}{2K}\right)^{2r}\frac{L(1+2r,\chi_{1})}{L(1,\chi_{1})} \zeta(1-2r)\Bigg)dr+O_{\epsilon}\left(K^{-\frac 1 2+\epsilon}\right).
\end{split}
\end{align*}
Upon applying the change of variables $\pi i \tau/M = r$, we arrive at the following conjecture.

\begin{conjecture}\label{1-c line}
\begin{equation*}
D_{1}(\mathcal{F}(K);f) = W_{f}+S_{\zeta}+S_{L}+S_{A'}+S_{\Gamma}+O\left(K^{-\frac 1 2+\epsilon}\right),
\end{equation*}
where

\begin{align}\label{def Wf}
W_{f} &:-\frac{1}{2M}\int_{(\mathcal{C}')}f\left(\tau \right)\bigg \langle\frac{X'_{k}}{X_{k}}\left(\frac 1 2 +\frac{\pi i \tau}{M}\right)\bigg \rangle_{K}d\tau,
\end{align}

\begin{align}
S_{\zeta}&:=\frac{1}{M}\int_{(\mathcal{C}')}f(\tau)\frac{\zeta'}{\zeta}\left(1+\frac{2\pi i \tau}{M}\right)d\tau,
\end{align}

\begin{align}\label{L function}
S_{L}&:=-\frac{1}{M}\int_{(\mathcal{C}')}f(\tau)\frac{L'}{L}\left(1+\frac{2\pi i \tau}{M},\chi_{1}\right)d\tau,
\end{align}

\begin{equation}
S_{A'}:=\frac{1}{M}\int_{(\mathcal{C}')}f(\tau)A'\left(\frac{\pi i \tau}{M},\frac{\pi i \tau}{M}\right)d\tau,
\end{equation}

\begin{equation}
S_{\Gamma}:=-\frac{1}{M}\int_{(\mathcal{C}')}f(\tau)\frac{A\left(-\frac{\pi i \tau}{M},\frac{\pi i \tau}{M}\right)}{1-\frac{2\pi i \tau}{M}}\left(\frac{\pi}{2K}\right)^{\frac{2\pi i \tau}{M}}\frac{L\left(1+\frac{2\pi i \tau}{M},\chi_{1}\right)}{L(1,\chi_{1})}\zeta\left(1-\frac{2\pi i \tau}{M}\right)d\tau,
\end{equation}

and $(\mathcal{C}')$ denotes the horizontal line Im$(\tau) = - Mc'/\pi$.
\end{conjecture}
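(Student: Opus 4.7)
The conjecture is essentially a formal restatement of the computation performed in the lines immediately preceding it, so my plan is to justify each step of that derivation carefully and show that the accumulated error terms sum to $O(K^{-1/2+\epsilon})$.

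First, I would begin from equation (\ref{cauchy one level}), which expresses $D_{1}(\mathcal{F}(K);f)$ as the difference of two contour integrals by Cauchy's residue theorem (using GRH to ensure all nontrivial zeros lie on the critical line, so that shifting contours from $c$ to $1-c$ picks up exactly the sum over zeros). The test function $f$, extended holomorphically to the strip $|\operatorname{Im}(z)|<2$ and decaying rapidly by Lemma \ref{fourierdecay}, makes the contour shifts legitimate. For the $(1-c)$-integral, I apply the change of variable $s \mapsto 1-s$ and invoke the functional equation (\ref{functional equation}) in the form (\ref{functional X}) to rewrite $L'_k/L_k(1-s)$ as $X'_k/X_k(s) - L'_k/L_k(s)$. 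Combining both integrals along $(c)$ and translating by $s = 1/2 + r$ yields (\ref{one level}).

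Next, I would split the contour integral (\ref{one level}) into a central portion $|\operatorname{Im}(r)| \leq K^{1-\epsilon}$ and two tails. On the tails, convex-hybrid bounds for $L'_k/L_k$ inside the critical strip combined with Stirling for $X'_k/X_k$ give polynomial-in-$K$ growth, while Lemma \ref{fourierdecay} supplies arbitrary polynomial decay of $f(iMr/\pi)$; choosing $n$ large enough makes the tail contribution $O_\epsilon(K^{\epsilon-1})$, which is absorbed into the stated error. On the central portion, the hypotheses of Theorem \ref{log der ratios} are met ($1/M < \operatorname{Re}(r) < 1/4$ by the choice $1/2 + M^{-1} < c < 3/4$, and $\operatorname{Im}(r) \ll K^{1-\epsilon}$ by construction), so I may substitute the expression it furnishes for $\langle L'_k/L_k(\tfrac12+r)\rangle_K$, with error $O(K^{-1/2+\epsilon})$ uniform in $r$ on this range. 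Integrated against $f(iMr/\pi)$, whose $L^1$ mass along the contour is $O(1)$, this error term remains $O(K^{-1/2+\epsilon})$.

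Finally, I would perform the change of variable $\pi i \tau / M = r$, which sends the vertical line $\operatorname{Re}(r) = c'$ to the horizontal line $\operatorname{Im}(\tau) = -Mc'/\pi$, i.e.\ the contour $(\mathcal{C}')$. Collecting the five resulting pieces (the $X'_k/X_k$ average, the two logarithmic derivatives $\zeta'/\zeta$ and $L'/L(\cdot,\chi_1)$, the $A'$ term, and the $\Gamma$/$A(-r,r)$ term coming from the functional-equation swap) against the appropriate prefactors gives precisely $W_f + S_\zeta + S_L + S_{A'} + S_\Gamma$ with the sign conventions of the definitions (\ref{def Wf})--(\ref{L function}).

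The main obstacle I anticipate is justifying the uniformity of the Ratios-Conjecture error term in Theorem \ref{log der ratios} along the entire central portion of the contour, and ensuring that integrating the conjectured error $O(K^{-1/2+\epsilon})$ over a contour of length $O(K^{1-\epsilon})$ still yields $O(K^{-1/2+\epsilon})$ after weighting by the rapidly decaying $f$. This is where Lemma \ref{fourierdecay} is indispensable: it converts what would otherwise be a divergent integral into an absolutely convergent one with mass $O_f(1)$, so the error is genuinely controlled by the Ratios Conjecture's own error rather than amplified by the length of the contour.
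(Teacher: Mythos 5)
Your proposal is correct and follows essentially the same route as the paper: Cauchy's residue theorem, the substitution $s\mapsto 1-s$ with the functional equation (\ref{functional X}), tail bounds via Lemma \ref{fourierdecay} together with Stirling and growth estimates on $L_k'/L_k$, substitution of Theorem \ref{log der ratios} on the central range, and the change of variables $\pi i\tau/M=r$ onto $(\mathcal{C}')$. The uniformity issue you flag is exactly the caveat the paper addresses by assuming the Ratios Conjecture error term survives differentiation and remains uniform in the range (\ref{ratios domain}).
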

Before directly computing $W_{f}$, $S_{\zeta}$, $S_{A'}$, $S_{L}$, and $S_{\Gamma}$, we note the following lemma, which will be relevant for our work in Section 6.

\begin{lemma}\label{zeta and L as sum}
We have
\begin{equation}\label{zeta as sum}
S_{\zeta} = - \frac{1}{M}\sum_{n=1}^{\infty}\frac{\Lambda(n)}{n}\widehat{f}\left(\frac{\log n}{M}\right),
\end{equation}
\begin{equation}\label{L as sum}
S_{L} = \frac{1}{M}\sum_{n=1}^{\infty}\frac{\Lambda(n)\chi_{1}(n)}{n}\widehat{f}\left(\frac{\log n}{M}\right),
\end{equation}
and
\begin{equation}\label{A as sum}
S_{A'} = -\frac{2}{M}\sum_{p\equiv 3(4)}\sum_{n=1}^{\infty}\frac{\log p}{p^{2n}}\widehat{f}\left(\frac{2n \textnormal{ log }p}{M}\right).
\end{equation}

\end{lemma}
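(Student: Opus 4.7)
The plan is to recognize each of the three integrals as a Fourier-type integral, expand the corresponding $L$-function (or its logarithmic derivative, or $A'(r,r)$) as an absolutely convergent Dirichlet series on the contour $(\mathcal{C}')$, and then interchange sum and integral. First I would check absolute convergence: since $(\mathcal{C}')$ is the horizontal line $\text{Im}(\tau)=-Mc'/\pi$ with $c'>M^{-1}$, the argument $1+2\pi i\tau/M$ in $\zeta'/\zeta$ and $L'/L$ lies on the line $\text{Re}(s)=1+2c'>1$, so the series $-\zeta'/\zeta(s)=\sum_n\Lambda(n)n^{-s}$ and $-L'/L(s,\chi_1)=\sum_n\Lambda(n)\chi_1(n)n^{-s}$ converge absolutely. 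Likewise, by Lemma \ref{A derivative}, with $r=\pi i\tau/M$ one has
\begin{equation*}
A'\!\left(\tfrac{\pi i\tau}{M},\tfrac{\pi i\tau}{M}\right)=-2\sum_{p\equiv 3(4)}\frac{\log p}{p^{2+4\pi i\tau/M}-1}=-2\sum_{p\equiv 3(4)}\sum_{n=1}^{\infty}\frac{\log p}{p^{2n}}\,p^{-4\pi i n\tau/M},
\end{equation*}
and this double series converges absolutely on $(\mathcal{C}')$ since $|p^{-4\pi i n\tau/M}|=p^{4n c'}$ is dominated by $p^{2n}$ when $c'<1/2$.

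Next I would swap sum and integral. For $S_\zeta$ this yields terms of the shape
\begin{equation*}
-\frac{1}{M}\sum_{n=1}^\infty\frac{\Lambda(n)}{n}\int_{(\mathcal{C}')}f(\tau)\,e^{-2\pi i\tau\log n/M}\,d\tau,
\end{equation*}
and analogous expressions arise for $S_L$ and $S_{A'}$. The key observation is that each inner integral is exactly a Fourier transform of $f$ once we shift the contour down to the real axis. Since $f$ extends holomorphically to the strip $|\text{Im}(z)|<2$ (with $Mc'/\pi<2$ for $K$ large) and decays rapidly on horizontal lines by Lemma \ref{fourierdecay}, contour-shifting is legitimate and picks up no residues. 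Then
\begin{equation*}
\int_{\R} f(\tau)e^{-2\pi i\tau x}\,d\tau=\widehat{f}(x)
\end{equation*}
by Fourier inversion with $x=\log n/M$, $\log n/M$, or $2n\log p/M$ respectively.

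Putting these together gives the three claimed identities directly: the minus sign in \eqref{zeta as sum} comes from the sign in $-\zeta'/\zeta(s)=\sum\Lambda(n)n^{-s}$, while the plus sign in \eqref{L as sum} comes from combining that sign with the overall minus in front of $S_L$ in its definition \eqref{L function}, and the factor $-2$ in \eqref{A as sum} is inherited from Lemma \ref{A derivative}. I do not anticipate any serious obstacle; the only point requiring a bit of care is justifying the contour shift and the interchange of summation and integration, both of which follow from the rapid decay of $f$ (Lemma \ref{fourierdecay}) and the absolute convergence noted above.
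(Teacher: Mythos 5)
Your proposal is correct and follows essentially the same route as the paper: expand each logarithmic derivative (resp.\ $A'(r,r)$ via Lemma \ref{A derivative}) as an absolutely convergent Dirichlet series on $(\mathcal{C}')$, interchange sum and integral, and shift the contour to $\mathrm{Im}(\tau)=0$ to recognize each inner integral as $\widehat{f}$ evaluated at the appropriate point. (One trivial slip: on $(\mathcal{C}')$ one has $r=\pi i\tau/M$ with $\mathrm{Re}(r)=c'>0$, so $\bigl|p^{-4\pi i n\tau/M}\bigr|=p^{-4nc'}$ rather than $p^{4nc'}$, which only improves the convergence you invoke.)
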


\begin{proof}
Write
\begin{align*}
S_{\zeta}&= -\frac{1}{M}\sum_{n}\frac{\Lambda(n)}{n}\int_{(\mathcal{C'})}f(\tau)e^{-2\pi i \tau\left(\frac{\log n}{M}\right)}d\tau,
\end{align*}
where we apply Lemma \ref{fourierdecay}, and note that interchanging the order of integration and summation is justified since the summation inside the integral converges absolutely and uniformly on compact sets.  Since the integrand is an entire function, we may shift the contour from $\mathcal{C'}$ to the line Im$(\tau) = 0$, from which (\ref{zeta as sum}) then follows.\\
\\
Equation (\ref{L as sum}) follows from an identical argument, upon noting that for Re$(z)>1$, we may write
\begin{equation*}
\frac{L'}{L}(z,\chi_{1}) = -\sum_{n=1}^{\infty}\frac{\Lambda(n)\chi_{1}(n)}{n^z}.
\end{equation*}

Finally, to compute $S_{A'}$, we note that
\begin{align*}
S_{A'}&= -\frac{2}{M}\sum_{p \equiv 3(4)}\sum_{n=1}^{\infty}\frac{\log p}{p^{2n}}\int_{(\mathcal{C}')}e^{- (2\pi i \tau)(\frac{2 n \log p}{M})}f(\tau)d\tau
\end{align*}
by Lemma \ref{A derivative}.  (\ref{A as sum}) then follows upon shifting the contour to the line Im$(\tau) = 0$.
\end{proof}

\section{Explicit Computations}
In this section, we explicitly compute $W_{f}$, $S_{\zeta}$, $S_{A'}$, $S_{L}$, and $S_{\Gamma}$, resulting in Conjecture \ref{ratios prediction}.
\subsection{Computing $W_{f}$:} 
\begin{lemma}\label{computing Wf} We have
\begin{align*}
W_{f}=  \widehat{f}(0) +\frac{1}{M}(\log 2-1-\log \pi)\widehat{f}(0)+O\left(\frac{1}{K}\right).
\end{align*}
\end{lemma}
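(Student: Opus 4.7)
The plan is to compute the inner average $\bigl\langle X_k'/X_k\bigl(\tfrac12+\tfrac{\pi i \tau}{M}\bigr)\bigr\rangle_K$ explicitly via Stirling, observe that the leading-order result is essentially constant in $\tau$, and then shift the contour from $\mathcal{C}'$ to the real axis to recognize the remaining integral as $\widehat{f}(0)$.

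First, I would apply the logarithmic-derivative formula (\ref{logarithmic derivative func}) to write
\[
\frac{X_k'}{X_k}\!\left(\tfrac{1}{2} + \tfrac{\pi i \tau}{M}\right) = 2\log\pi - \psi\!\left(2k + \tfrac{1}{2} + \tfrac{\pi i \tau}{M}\right) - \psi\!\left(2k + \tfrac{1}{2} - \tfrac{\pi i \tau}{M}\right),
\]
where $\psi := \Gamma'/\Gamma$. Stirling's approximation gives $\psi(z) = \log z + O(|z|^{-1})$ uniformly as $|z|\to\infty$ in the right half plane, so for $k\geq 1$ and $\tau$ restricted to any fixed horizontal strip, each digamma factor equals $\log(2k) + O(1/k)$. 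Averaging over $1\leq k\leq K$, and using $\log K! = K\log K - K + O(\log K)$ to obtain
\[
\frac{1}{K}\sum_{k=1}^K\log(2k) = \log 2 + \log K - 1 + O\!\left(\tfrac{\log K}{K}\right),
\]
yields (uniformly in $\tau\in \mathcal{C}'$, with $M=\log K$)
\[
\bigg\langle \frac{X_k'}{X_k}\!\left(\tfrac{1}{2} + \tfrac{\pi i \tau}{M}\right)\bigg\rangle_K = -2M - 2\log 2 + 2 + 2\log\pi + O\!\left(\tfrac{\log K}{K}\right).
\]

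Substituting into (\ref{def Wf}) and factoring the (now $\tau$-independent) constant out of the integral gives
\[
W_f = \left[1 + \frac{\log 2 - 1 - \log \pi}{M}\right]\int_{\mathcal{C}'} f(\tau)\, d\tau + O\!\left(\tfrac{\log K}{MK}\int_{\mathcal{C}'}|f(\tau)|\,d\tau\right).
\]
By Lemma \ref{fourierdecay}, $f$ decays rapidly on horizontal strips, so the error-integral is $O(1)$ and the total error is $O(1/K)$. To finish, since $f$ is entire and the contour $\mathcal{C}'$ (at $\mathrm{Im}(\tau) = -Mc'/\pi$ with $c'<1/4$) lies strictly between the real axis and the nearest poles of the averaged $\Gamma$-factors (which sit at $|\mathrm{Im}(\tau)|\geq 5M/(2\pi)$), Cauchy's theorem permits shifting the contour to $\mathbb{R}$ without crossing any singularities. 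Fourier inversion then gives $\int_{\mathbb{R}}f(\tau)\,d\tau = \widehat{f}(0)$, and combining the pieces yields the stated identity.

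The main technical care is to keep the Stirling error $O(1/k)$ uniform in $\tau$ along the contour $\mathcal{C}'$, so that after averaging in $k$ and integrating against the rapidly decaying $f$, the remainder is genuinely $O(\log K/(MK))=O(1/K)$; the contour shift itself is elementary once one verifies that the nearest poles are bounded away from $\mathcal{C}'$.
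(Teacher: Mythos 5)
Your overall strategy is the same as the paper's (Stirling on the averaged $\Gamma$-factors, average over $k$ via $\log K!$, recognize $\int_{\R}f=\widehat{f}(0)$ after a harmless contour shift), and your arithmetic for the constant $\log 2-1-\log\pi$ is correct. But there is a genuine gap in the step you yourself flag as the main technical point: the claim that each digamma factor equals $\log(2k)+O(1/k)$ \emph{uniformly} for $\tau$ in a horizontal strip is false, because the strip is unbounded in the real direction. On $\mathcal{C}'$ one has
\begin{equation*}
\frac{\Gamma'}{\Gamma}\Bigl(2k+\tfrac12\pm\tfrac{\pi i\tau}{M}\Bigr)=\log\Bigl|2k+\tfrac12\pm\tfrac{\pi i\tau}{M}\Bigr|+O\Bigl(\bigl(k+|\tau|/M\bigr)^{-1}\Bigr),
\end{equation*}
and for $|\mathrm{Re}(\tau)|\gg kM$ the main term is of size $\log(|\tau|/M)$, not $\log(2k)+O(1/k)$. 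So the quantity you factor out of the integral is not actually constant in $\tau$, and the discrepancy $\log\bigl|1+\pi i\tau/(2kM)\bigr|$ is unbounded over the contour; asserting uniformity does not dispose of it.

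The repair is exactly the step the paper supplies and your write-up omits: split the $\tau$-integral into $|\tau|\leq\sqrt{M}$ (where $\log|1+\pi i\tau/(\mathbf{k}M)|=O\bigl(1/(k\sqrt{M})\bigr)$ and the error averages to something absorbed by $O(1/K)$) and $|\tau|\geq\sqrt{M}$ (where the rapid decay of $f$ on horizontal strips, Lemma \ref{fourierdecay}, makes the contribution $O(M^{-100})$ despite the logarithmic growth of the integrand). Without some such splitting, the interchange of ``Stirling, then pull the constant out of the integral'' is not justified. Once that is added, your argument matches the paper's and the stated error term $O(1/K)$ follows.
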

\begin{proof}

Since $X'_{k}/X_{k}$ is holomorphic within the horizontal strip $-Mc'/ \pi \leq \textnormal{Im}(\tau) \leq 0$,  we may shift the line of integration to Im$(\tau) = 0$, and write 
\begin{align*}
W_{f} &\phantom{:}=\frac{1}{2M}\int_{\R}f(\tau)\bigg(\bigg \langle\frac{\Gamma'}{\Gamma}\left(\mathbf{k}+\frac{\pi i \tau}{M}\right)+\frac{\Gamma'}{\Gamma}\left(\mathbf{k}-\frac{\pi i \tau}{M}\right)\bigg \rangle_{K}-2\textnormal{ log }\pi\bigg)d\tau.\nonumber 
\end{align*}
By Stirling's approximation, for any $\textbf{k} \in \R$ and $\alpha \in i \R$,
\begin{align*}
\frac{\Gamma'}{\Gamma}\bigg(\mathbf{k}+\alpha\bigg)+\frac{\Gamma'}{\Gamma}\bigg(\mathbf{k}-\alpha\bigg) &= \log |\mathbf{k}+\alpha|+\log |\mathbf{k}-\alpha|+O\left(\frac{1}{|\mathbf{k}|+|\alpha|}\right)\\
&= 2 \cdot \log |\mathbf{k}+\alpha|+O\left(\frac{1}{|\mathbf{k}|+|\alpha|}\right).
\end{align*}

Thus

\begin{align*}
W_{f} &=\frac{1}{M}\int_{\R}f(\tau)\bigg( \bigg \langle \log \left|\mathbf{k}+\frac{\pi i \tau}{M}\right|\bigg \rangle_{K}-\textnormal{ log }\pi\bigg)d\tau+O\left(\frac{1}{K}\right),
\end{align*}

where the error term is calculated upon noting that
\[O\left(\frac{1}{K}\sum_{k=1}^{K}\frac{1}{|k|}\int_{\R}f(\tau)d\tau \right) = O\left(\frac{M}{K}\right).\]

Next, write
\begin{align*}
\int_{\R}f\left(\tau\right)\left(\textnormal{log }\left|\mathbf{k}+\frac{\pi i \tau}{M}\right|\right)d\tau &= \int_{\R}f(\tau)\left(\textnormal{log }\left|\mathbf{k}\left(1+\frac{\pi i \tau}{\mathbf{k}\cdot M}\right)\right|\right)d\tau,\\
&= \widehat{f}(0)\cdot \log \mathbf{k}+\int_{\R}f(\tau)\left(\textnormal{log }\left|\left(1+\frac{\pi i \tau}{\mathbf{k}\cdot M}\right)\right|\right)d\tau,
\end{align*}
and split
\begin{align*}
\begin{split}
\int_{\R}f(\tau)\textnormal{log }\left|\left(1+\frac{\pi i \tau}{\mathbf{k}\cdot M}\right)\right|d \tau&= \bigg( \int_{|\tau| \leq \sqrt{M}}+\int_{|\tau| \geq \sqrt{M}}\bigg) f(\tau)\textnormal{log }\left|\left(1+\frac{\pi i \tau}{\mathbf{k}\cdot M}\right)\right|d\tau.
\end{split}
\end{align*}

Since for $|z|<1$ we have $\log |1+z| = |z| + O(z^{2})$, we bound
\begin{equation*}
\int_{|\tau| \leq \sqrt{M}}f(\tau)\textnormal{log }\left|\left(1+\frac{\pi i \tau}{\mathbf{k}\cdot M}\right)\right|d\tau = O\left(\frac{1}{\mathbf{k}\sqrt{M}}\right),
\end{equation*}
and by the rapid decay of $f$,
\begin{align*}
\int_{|\tau| \geq \sqrt{M}}f(\tau)\textnormal{log }\left|\left(1+\frac{\pi i \tau}{\mathbf{k}\cdot M}\right)\right|d\tau&= O\bigg (\int_{|\tau| \geq \sqrt{M}}f(\tau)\textnormal{log}\left|\tau\right|d\tau \bigg )= O\bigg (\frac{1}{M^{100}} \bigg ).
\end{align*}
Thus
\begin{equation*}
\int_{\R}f(\tau)\bigg(\log \bigg|\mathbf{k}+\frac{\pi i \tau}{M} \bigg| \bigg)d\tau = \widehat{f}(0)\cdot \log |\mathbf{k}|+O\left(\frac{1}{k \cdot \sqrt{M}}\right),
\end{equation*}
so that
\begin{align}\label{Wf approx}
W_{f}&=\frac 1 M \cdot \widehat{f}(0) \bigg(\frac{1}{K}\sum_{k=1}^{K}\log |\mathbf{k}|-\log \pi\bigg)+O\left(\frac {1}{K}\right).
\end{align}

Finally, since
\begin{equation*}
\sum_{k=1}^{K}\log |2k|\leq \sum_{k=1}^{K}\log |\mathbf{k}| \leq\sum_{k=2}^{K+1}\log |2k|, 
\end{equation*}
we have
\begin{align}\label{stirling}
\begin{split}
\sum_{k=1}^{K}\log |\mathbf{k}| &= \sum_{k=1}^{K}\left(\log k+\log 2\right)+O(\log K) \\
&= \log K!+(\log 2)K+O(M)\\
&= K\cdot M+(\log 2-1)K+O(M).
\end{split}
\end{align}
Lemma \ref{computing Wf} now follows from (\ref{Wf approx}) and (\ref{stirling}). 
\end{proof}
\subsection{Computing $S_{\zeta}$:}
\begin{lemma}\label{Szeta}
Fix $J \in \N$.  We have
\begin{equation}
S_{\zeta} = - \frac{f(0)}{2}-\sum_{j=1}^{J}\frac{c_{j}\widehat{f}^{(j-1)}(0)}{M^{j}}+O_{J}\left(M^{-(J+1)}\right),
\end{equation}
where 
\begin{equation}\label{c1}
c_{1}:=\int_{1}^{\infty}\frac{\psi(t)-t}{t^{2}}dt+1,
\end{equation}
and for $j \geq 2$,
\begin{equation}
c_{j}:=\frac{1}{(j-2)!}\int_{1}^{\infty}(\log t)^{j-2}\left(\frac{\log t}{j-1}-1\right)\frac{\psi(t)-t}{t^{2}}dt,
\end{equation}
where $\psi(t):= \sum_{n\leq t}\Lambda(n)$ is the second Chebyshev function.
\end{lemma}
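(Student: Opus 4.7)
The plan is to start from the Dirichlet-series representation
\[
S_\zeta \;=\; -\frac{1}{M}\sum_{n=1}^\infty \frac{\Lambda(n)}{n}\,\widehat{f}\!\left(\frac{\log n}{M}\right)
\]
furnished by Lemma \ref{zeta and L as sum}, and to evaluate it asymptotically by Abel summation against $\psi(t):=\sum_{n\le t}\Lambda(n)$ combined with a Taylor expansion of $\widehat{f}$ at the origin. Setting $g(t):=\widehat{f}(\log t/M)/t$, partial summation gives $\sum_n \Lambda(n)g(n) = -\int_1^\infty \psi(t)g'(t)\,dt$, with no boundary contributions: at $t=1$ because $\psi\equiv 0$ on $[1,2)$, and at $t=\infty$ because the compact support of $\widehat{f}$ in some $[-\alpha,\alpha]$ forces $g(t)=0$ for $t\ge e^{\alpha M}$.

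The decomposition $\psi(t)=t+E(t)$, with $E(t):=\psi(t)-t$, separates the argument into a ``main'' piece and an ``error'' piece. For the main piece, a second integration by parts yields
\[
-\int_1^\infty t\,g'(t)\,dt \;=\; g(1)+\int_1^\infty g(t)\,dt \;=\; \widehat{f}(0)+M\!\int_0^\infty \widehat{f}(u)\,du \;=\; \widehat{f}(0)+\tfrac{M}{2}f(0),
\]
using the substitution $u=\log t/M$ and the evenness of $\widehat{f}$ to identify $\int_0^\infty\widehat{f}(u)\,du=\tfrac{1}{2}f(0)$. After dividing by $-M$, this already produces the leading $-f(0)/2$ together with an explicit $-\widehat{f}(0)/M$.

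For the error piece, the identity $g'(t)=\widehat{f}'(\log t/M)/(Mt^2)-\widehat{f}(\log t/M)/t^2$ reduces matters to two integrals of the form $\int_1^{e^{\alpha M}}\widehat{f}^{(\ell)}(\log t/M)E(t)/t^2\,dt$ with $\ell\in\{0,1\}$ (the upper limit coming from the support of $\widehat{f}$). On this range $\log t/M\in[0,\alpha]$, and I would apply Taylor's theorem with Lagrange remainder to each integrand, introducing the constants $J_k:=\int_1^\infty (\log t)^k E(t)/t^2\,dt$. These converge absolutely by the prime number theorem bound $E(t)\ll t\exp(-c\sqrt{\log t})$; replacing the truncated upper limit $e^{\alpha M}$ by $\infty$ costs only a super-polynomially small quantity in $M$, and expanding $\widehat{f}^{(\ell)}$ to order $J-\ell$ confines the Lagrange remainders to $O(M^{-(J+1)})$.

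Finally, I would collect: the $\ell=0$ piece contributes $-\sum_{k=0}^{J-1}\widehat{f}^{(k)}(0)\,J_k/(k!\,M^{k+1})$, and the $\ell=1$ piece contributes $+\sum_{k=0}^{J-2}\widehat{f}^{(k+1)}(0)\,J_k/(k!\,M^{k+2})$. Re-indexing by $j=k+1$ and $j=k+2$ respectively and combining with the explicit $-\widehat{f}(0)/M$, the coefficient of $\widehat{f}^{(j-1)}(0)/M^j$ equals $-(1+J_0)=-c_1$ when $j=1$, and for $j\ge 2$ equals
\[
\frac{1}{(j-2)!}\!\left(J_{j-2}-\frac{J_{j-1}}{j-1}\right)
\;=\; -\frac{1}{(j-2)!}\int_1^\infty (\log t)^{j-2}\!\left(\frac{\log t}{j-1}-1\right)\frac{E(t)}{t^2}\,dt
\;=\; -c_j,
\]
matching the statement. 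The main obstacle is not conceptual but bookkeeping: aligning the two shifted Taylor sums so that their combinatorial coefficients assemble precisely into the $c_j$ given in the statement. The only analytic input beyond this is the PNT-strength bound on $E(t)$ that underwrites convergence of every $J_k$ and the $O(M^{-(J+1)})$ size of the Taylor remainders and tails.
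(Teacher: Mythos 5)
Your proposal is correct and follows essentially the same route as the paper: Abel summation of $-\frac{1}{M}\sum_n \Lambda(n)n^{-1}\widehat{f}(\log n/M)$ against $\psi(t)$, the splitting $\psi(t)=t+(\psi(t)-t)$ with the $t$-piece producing $-f(0)/2-\widehat{f}(0)/M$, the prime number theorem bound on $\psi(t)-t$ to control tails and guarantee convergence of the moments $\int_1^\infty(\log t)^k(\psi(t)-t)t^{-2}\,dt$, and a Taylor expansion of $\widehat{f}$ and $\widehat{f}'$ at $0$ whose re-indexed coefficients assemble into the stated $c_j$. The only cosmetic differences are that you truncate the integral via the compact support of $\widehat{f}$ where the paper truncates at $K^{\xi/2}$ using the PNT bound, and your final bookkeeping correctly fixes a sign that is garbled in the paper's intermediate display.
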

\begin{proof}
The methods employed in this proof are essentially equivalent to those found in Section 4 of \cite{DFiorPark}.  Note that
\begin{align*}
&\frac{1}{M}\sum_{n=1}^{\infty}\frac{\Lambda(n)}{n}\widehat{f}\left(\frac{\log n}{M}\right) = \frac{1}{M}\int_{1}^{\infty}\frac{1}{t}\widehat{f}\left(\frac{\log t}{M}\right)d \psi(t)\\
&= \frac{1}{M}\int_{1}^{\infty}\frac{1}{t}\widehat{f}\left(\frac{\log t}{M}\right)d \psi(t)-\frac{1}{M}\int_{1}^{\infty}\frac{1}{t}\widehat{f}\left(\frac{\log t}{M}\right)dt+\frac{1}{M}\int_{1}^{\infty}\frac{1}{t}\widehat{f}\left(\frac{\log t}{M}\right)dt.
\end{align*}
Setting $u:=\frac{\log t}{M}$, we have that 
\begin{align*}
\frac{1}{M}\int_{1}^{\infty}\frac{1}{t}\widehat{f}\left(\frac{\log t}{M}\right)dt = \int_{0}^{\infty}\widehat{f}\left(u\right)du = \frac{f(0)}{2},
\end{align*}
Integrating by parts, we moreover find that
\begin{align*}
\int_{1}^{\infty}\frac{1}{t}\widehat{f}\left(\frac{\log t}{M}\right)d \psi(t) &= \frac{\psi(t)}{t}\widehat{f}\left(\frac{\log t}{M}\right)\bigg|_{1}^{\infty} - \int_{1}^{\infty}\bigg[\frac{1}{t}\widehat{f}\left(\frac{\log t}{M}\right) \bigg]' \psi(t)dt\\
&= - \int_{1}^{\infty}\bigg[\frac{1}{t}\widehat{f}\left(\frac{\log t}{M}\right) \bigg]' \psi(t)dt,
\end{align*}
and similarly that
\begin{align*}
\int_{1}^{\infty}\frac{1}{t}\widehat{f}\left(\frac{\log t}{M}\right)dt &= \widehat{f}\left(\frac{\log t}{M}\right)\bigg|_{1}^{\infty} - \int_{1}^{\infty}\bigg[\frac{1}{t}\widehat{f}\left(\frac{\log t}{M}\right) \bigg]' t dt\\
&= -\widehat{f}(0)- \int_{1}^{\infty}\bigg[\frac{1}{t}\widehat{f}\left(\frac{\log t}{M}\right) \bigg]' t dt.
\end{align*}
It follows that
\begin{align*}
S_{\zeta} &= -\frac{f(0)}{2}-\frac{\widehat{f}(0)}{M} -\frac{1}{M} \int_{1}^{\infty}\left[\frac{1}{M}\widehat{f}'\left(\frac{\log t}{M}\right)-\widehat{f}\left(\frac{\log t}{M}\right) \right] \frac{\psi(t)-t}{t^{2}}dt.
\end{align*}

By the prime number theorem
\[\psi(t)-t \ll t \cdot \textnormal{exp}(-2c\sqrt{\log t}),\]

so that for any $0 < \xi < 1$,
\[\int_{K^{\xi/2}}^{\infty}\left[\frac{1}{M}\widehat{f}'\left(\frac{\log t}{M}\right)-\widehat{f}\left(\frac{\log t}{M}\right) \right] \frac{\psi(t)-t}{t^{2}}dt \ll \textnormal{exp}(-c \sqrt{\xi M})\]
upon recalling that $M:= \log K$.  Upon Taylor expanding about $\widehat{f}(0)$ we then find that

\begin{align*}
&-\frac{1}{M} \int_{1}^{K^{\frac{\xi}{2}}}\left[\frac{1}{M}\widehat{f}'\left(\frac{\log t}{M}\right)-\widehat{f}\left(\frac{\log t}{M}\right) \right] \frac{\psi(t)-t}{t^{2}}dt\\
&=\sum_{j=0}^{J}\frac{1}{j!}\left(\widehat{f}^{(j)}(0)-\frac{\widehat{f}^{(j+1)}(0)}{M}\right)\int_{1}^{K^{\frac{\xi}{2}}}\frac{(\log t)^{j}}{M^{j+1}} \frac{\psi(t)-t}{t^{2}}dt\\
&\phantom{=}+O_{J}\left(\int_{1}^{K^{\frac{\xi}{2}}}\frac{(\log t)^{J+1}}{M^{J+2}}\frac{\psi(t)-t}{t^{2}}dt\right)\\
&=\sum_{j=1}^{J+1}\frac{c_{j}\widehat{f}^{j-1}(0)}{M^{j}}+O_{J}\left(M^{-(J+2)}+e^{-c\sqrt{\xi M}}\right).
\end{align*}
The result then follows upon choosing $\xi = M^{-1+\delta}$ for some $\delta >0$.
\end{proof}

\subsection{Computing $S_{L}$ and $S_{A'}$:}
To compute $S_{L}$ we require the following lemma:

\begin{lemma}\label{special value} We have
\begin{align*}
\frac{L'}{L}(1,\chi_{1}) &= \bigg(\gamma_{0}-2\log 2-4\log |\eta(i)|\bigg),
\end{align*}
where 
\begin{equation}\label{chowla}
\eta(i)=e^{-\frac{\pi}{12}}\prod_{n=1}^\infty \bigg(1-e^{- 2\pi n}\bigg) = \frac{\Gamma\left(\frac 1 4\right)}{2\pi^{3/4}}.
\end{equation}
\end{lemma}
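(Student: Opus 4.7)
The plan is to use the functional equation of $L(s,\chi_1)$ to reduce $L'/L(1,\chi_1)$ to $L'/L(0,\chi_1)$, evaluate the latter via the Hurwitz zeta representation, and finally convert the resulting $\Gamma$-values into $\log|\eta(i)|$ through the Chowla--Selberg identity $\eta(i)=\Gamma(1/4)/(2\pi^{3/4})$ already recorded in \eqref{chowla}.

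\textbf{Step 1 (functional equation).} Since $\chi_1$ is the primitive odd real character of conductor $4$, its Gauss sum $\tau(\chi_1)=2i$ forces root number $+1$, so the completed $L$-function $\Lambda(s,\chi_1)=(4/\pi)^{(s+1)/2}\Gamma((s+1)/2)L(s,\chi_1)$ satisfies $\Lambda(s,\chi_1)=\Lambda(1-s,\chi_1)$. Taking logarithmic derivatives, evaluating at $s=1$, and substituting the classical values $\Gamma'/\Gamma(1)=-\gamma_0$ and $\Gamma'/\Gamma(1/2)=-\gamma_0-2\log 2$, I obtain
$$\frac{L'}{L}(1,\chi_1) \;=\; \gamma_0 + \log\pi - \log 2 \;-\; \frac{L'}{L}(0,\chi_1).$$

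\textbf{Step 2 (Hurwitz zeta).} I write $L(s,\chi_1)=4^{-s}\bigl(\zeta(s,1/4)-\zeta(s,3/4)\bigr)$ and apply $\zeta(0,x)=1/2-x$ together with Lerch's formula $\zeta'(0,x)=\log\Gamma(x)-\tfrac12\log(2\pi)$. This yields $L(0,\chi_1)=1/2$ (consistent with the functional equation applied to the known value $L(1,\chi_1)=\pi/4$) and $L'(0,\chi_1)=-\log 2+\log\bigl(\Gamma(1/4)/\Gamma(3/4)\bigr)$. Euler's reflection $\Gamma(1/4)\Gamma(3/4)=\pi\sqrt{2}$ then eliminates $\Gamma(3/4)$, giving
$$\frac{L'}{L}(0,\chi_1) \;=\; 2L'(0,\chi_1) \;=\; -3\log 2 - 2\log\pi + 4\log\Gamma(1/4).$$

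\textbf{Step 3 (Chowla--Selberg and bookkeeping).} The identity $\eta(i)=\Gamma(1/4)/(2\pi^{3/4})$ gives $4\log\Gamma(1/4)=4\log|\eta(i)|+4\log 2+3\log\pi$, so $L'/L(0,\chi_1)=4\log|\eta(i)|+\log 2+\log\pi$. Substituting into the display of Step 1 cancels $\log\pi$ and one factor of $\log 2$, producing exactly $\gamma_0-2\log 2-4\log|\eta(i)|$. The first equality in \eqref{chowla} is nothing more than the definition of $\eta$ evaluated at $\tau=i$ (so $q=e^{-2\pi}$) and requires no argument. The only real difficulty is purely bookkeeping---keeping the $\log 2$ and $\log\pi$ factors straight through three layers of substitution---since all analytic inputs (the functional equation, Lerch's formula, and the Chowla--Selberg identity) are standard and can be quoted.
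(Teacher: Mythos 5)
Your proof is correct, but it takes a genuinely different route from the paper. The paper works entirely at $s=1$: it plugs $\tau=i$ into the Kronecker First Limit Formula to get the Laurent expansion of $\frac14\sum_{(m,n)\neq(0,0)}(m^2+n^2)^{-s}=\zeta(s)L(s,\chi_1)$ about $s=1$, expands the right-hand side using $\zeta(s)=\frac{1}{s-1}+\gamma_0+O(s-1)$ and $L(1,\chi_1)=\pi/4$, and equates the constant terms to read off $L'(1,\chi_1)$ directly in terms of $\log|\eta(i)|$; the Chowla--Selberg formula is then invoked only to convert $\eta(i)$ into $\Gamma(1/4)/(2\pi^{3/4})$. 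You instead reflect to $s=0$ via the functional equation, evaluate $L(0,\chi_1)$ and $L'(0,\chi_1)$ through Hurwitz zeta values and Lerch's formula $\zeta'(0,x)=\log\Gamma(x)-\tfrac12\log(2\pi)$, and use Euler reflection; this produces the answer naturally in terms of $4\log\Gamma(1/4)$, and the identity $\eta(i)=\Gamma(1/4)/(2\pi^{3/4})$ is then needed in the opposite direction, to rewrite $\Gamma(1/4)$ as $\eta(i)$. I checked your constants: the functional-equation step gives $\frac{L'}{L}(1,\chi_1)=\gamma_0+\log\pi-\log 2-\frac{L'}{L}(0,\chi_1)$ and your value $\frac{L'}{L}(0,\chi_1)=\log 2+\log\pi+4\log|\eta(i)|$ is right, so the bookkeeping closes. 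Your approach trades the Kronecker limit formula (a statement about real-analytic Eisenstein series) for more elementary special-function identities, at the cost of obscuring why $\eta$ appears at all; the paper's route makes the appearance of $\log|\eta(i)|$ structural rather than an after-the-fact substitution.
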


\begin{proof}
Upon inserting $\tau = i$ into the Kronecker First Limit Formula \cite{Lang}, we obtain
\begin{equation*}
\frac{1}{4}\sum_{(m,n)\ne (0,0)}\frac{1}{(m^2+n^2)^{s}} = \frac{\pi/4}{s-1}+\frac{\pi}{2} \bigg(\gamma_{0}-\log 2-2\log |\eta(i)|\bigg)+O(s-1),
\end{equation*}
where
\begin{equation*}
\eta(\tau) = e^{i\frac{\pi \tau}{12}}\prod_{n=1}^\infty \bigg(1-e^{2\pi i n \tau}\bigg).
\end{equation*}
Note that
\begin{align*}
\frac{1}{4}\sum_{(m,n)\ne (0,0)}\frac{1}{(m^2+n^2)^{s}} &= \zeta(s)\cdot L(s,\chi_{1})\\
&= \bigg(\frac{\pi/4}{s-1}+\left(\frac \pi 4\cdot \gamma_{0}+L'(1,\chi_{1})\right)+O(s-1)\bigg),
\end{align*}
since $L(1,\chi_{1}) = \pi/4$. By equating power series coefficients,
\begin{align*}
L'(1,\chi_{1}) &= \frac{\pi}{2} \bigg(\frac{\gamma_{0}}{2}-\log 2-2\log |\eta(i)|\bigg),
\end{align*}
and we conclude that
\begin{align*}
\frac{L'}{L}(1,\chi_{1}) &= \frac{4}{\pi}\cdot \frac{\pi}{2} \bigg(\frac{\gamma_{0}}{2}-\log 2-2\log |\eta(i)|\bigg)\\
&= \gamma_{0}-2\log 2-4\log |\eta(i)|.
\end{align*}
Equation (\ref{chowla}) follows from the Chowla-Selberg formula.

\end{proof}

\begin{lemma}\label{arithmetic terms}
We have
\begin{align}\label{Sl}
S_{L} = - \frac{\widehat{f}(0)}{M}\bigg(\gamma_{0}-2\log 2-4\log |\eta(i)|\bigg) + O\left(\frac{1}{M^{2}}\right),
\end{align}

and
\begin{align}\label{SA}
S_{A'} &= -\frac{2}{M}\sum_{p \equiv 3(4)}\frac{\log p}{p^2-1}\widehat{f}(0)+O\left(\frac{1}{M^{2}}\right).
\end{align}
\end{lemma}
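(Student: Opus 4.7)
The plan is to extract the leading $1/M$ behavior from the Dirichlet-series forms of $S_{L}$ and $S_{A'}$ recorded in Lemma \ref{zeta and L as sum}, Taylor expand $\widehat{f}$ near the origin, and then invoke Lemma \ref{special value} to identify the explicit constant in $S_{L}$.

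For $S_{A'}$, I would start from (\ref{A as sum}). Since $\widehat{f}$ is smooth and compactly supported, $\widehat{f}(2n\log p/M) = \widehat{f}(0) + O(n \log p/M)$ uniformly on the effective support. Combining this with $\sum_{n \geq 1} p^{-2n} = (p^{2}-1)^{-1}$ produces the main term $-\frac{2\widehat{f}(0)}{M}\sum_{p \equiv 3(4)}\frac{\log p}{p^{2}-1}$; the error is a constant multiple of $M^{-2}\sum_{p \equiv 3(4)}\sum_{n \geq 1} n(\log p)^{2}/p^{2n}$, and the double sum converges absolutely.

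For $S_{L}$, starting from (\ref{L as sum}), the sum is effectively finite because $\mathrm{supp}(\widehat{f})$ is compact. I would apply Abel summation with $\Psi(t) := \sum_{n \leq t}\Lambda(n)\chi_{1}(n)/n$. By the prime number theorem for the fixed character $\chi_{1}$ together with the classical zero-free region, $\Psi(t) = -\tfrac{L'}{L}(1,\chi_{1}) + O(\exp(-c\sqrt{\log t}))$. Summation by parts and the substitution $u = (\log t)/M$ give
\[
S_{L} \;=\; -\frac{1}{M}\int_{0}^{\infty}\Psi(e^{Mu})\,\widehat{f}'(u)\,du,
\]
where the boundary terms vanish thanks to $\Psi(1) = 0$ and the compact support of $\widehat{f}$. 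Replacing $\Psi(e^{Mu})$ by its limit $-L'/L(1,\chi_{1})$ and using $\int_{0}^{\infty}\widehat{f}'(u)\,du = -\widehat{f}(0)$ yields the main term $-\widehat{f}(0)\cdot L'/L(1,\chi_{1})/M$. The residual error is controlled by $\int_{0}^{\alpha}\exp(-c\sqrt{Mu})\,du = O(M^{-1})$, where $\mathrm{supp}(\widehat{f}) \subset [-\alpha,\alpha]$, so after the outer factor of $1/M$ the total remainder is $O(M^{-2})$. Substituting the explicit value of $L'/L(1,\chi_{1})$ from Lemma \ref{special value} then completes (\ref{Sl}).

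The main technical input is the quantitative PNT estimate on $\Psi(t)$, which is standard for this fixed character; once in hand, the remaining calculation is routine Taylor expansion. An alternative, contour-based route for $S_{L}$ would shift $(\mathcal{C}')$ in (\ref{L function}) directly to the real line---legitimate because $L(1,\chi_{1}) \neq 0$ and $L'/L(1+z,\chi_{1})$ is holomorphic in a neighborhood of $z=0$---and then Taylor expand in $\tau/M$, using the Schwartz decay of $f$ (Lemma \ref{fourierdecay}) to absorb the $\log|\tau|$ growth of $L'/L$ along vertical lines. Both routes land on the same conclusion.
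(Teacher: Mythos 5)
Your argument is correct, but for $S_{L}$ it follows a genuinely different route from the paper's. The paper proves \eqref{Sl} and \eqref{SA} by staying in the contour-integral representations \eqref{L function} and the analogous one for $S_{A'}$: it splits the contour into $|\mathrm{Re}(\tau)|\geq M^{\epsilon}$ (killed by the rapid decay of $f$, Lemma \ref{fourierdecay}) and the compact piece $|\mathrm{Re}(\tau)|\leq M^{\epsilon}$, where it Taylor expands $\frac{L'}{L}(1+\tfrac{2\pi i\tau}{M},\chi_{1})$ (respectively $A'$) about the regular point $1$ and picks off $\widehat{f}(0)\cdot\frac{L'}{L}(1,\chi_{1})$ — exactly the "alternative, contour-based route" you mention at the end. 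You instead start from the Dirichlet-series forms \eqref{L as sum} and \eqref{A as sum} of Lemma \ref{zeta and L as sum} and work on the arithmetic side: for $S_{A'}$ a direct Taylor expansion of $\widehat{f}$ against an absolutely convergent double sum (clean and correct), and for $S_{L}$ partial summation against $\Psi(t)=\sum_{n\leq t}\Lambda(n)\chi_{1}(n)/n$, using the effective PNT for the fixed character $\chi_{1}$ to get $\Psi(t)=-\frac{L'}{L}(1,\chi_{1})+O(e^{-c\sqrt{\log t}})$ and the elementary bound $\int_{0}^{\infty}e^{-c\sqrt{Mu}}\,du\ll M^{-1}$ for the remainder. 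This is precisely the strategy the paper reserves for $S_{\zeta}$ in Lemma \ref{Szeta} (following \cite{DFiorPark}), transplanted to $\chi_{1}$, where it is easier because $L(s,\chi_{1})$ is regular at $s=1$ and no pole term needs to be subtracted. Your approach requires the quantitative PNT input but makes the source of the constant $\frac{L'}{L}(1,\chi_{1})$ transparent and would generalize to further $M^{-j}$ terms exactly as in Lemma \ref{Szeta}; the paper's contour route avoids any zero-free-region input, needing only holomorphy of $L'/L(\cdot,\chi_{1})$ near $1$. Both land on the same main terms, and Lemma \ref{special value} supplies the explicit value of $\frac{L'}{L}(1,\chi_{1})$ in either case.
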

\begin{proof}
By Cauchy's residue theorem, we may shift the contour in (\ref{L function}) to the path $\mathcal{C}_{0} \cup \mathcal{C}_{1}$, where
\begin{equation*}
\mathcal{C}_{0}:= \{\textnormal{Im}(\tau) = 0, |\textnormal{Re}(\tau)| \geq M^{\epsilon}\} \hspace{5mm} \textnormal{and} \hspace{5mm}
\mathcal{C}_{1}:= \{\textnormal{Im}(\tau) = 0, |\textnormal{Re}(\tau)|\leq M^{\epsilon}\},
\end{equation*}
for some fixed $\epsilon >0$.  By the rapid decay of $f$, for any $A>0$ we may bound
\begin{align*}
\int_{(\mathcal{C}_{0})}f(\tau)\frac{L'}{L}\left(1+\frac{2\pi i \tau}{M},\chi_{1}\right)d\tau \ll \frac{1}{M^{A\cdot \epsilon}}= O\left(\frac{1}{M^{N}}\right)
\end{align*}
where $N > 0$ may be taken to be arbitrarily large.\\
\\
Since $\mathcal{C}_{1}$ is compact, we may switch the order of integration and summation and compute the integral along $\mathcal{C}_{1}$ using the Taylor expansion of $L'/L$.  This allows us to write
\begin{align*}
\int_{(\mathcal{C}_{1})}f\left(\tau\right)\frac{L'}{L}\left(1+i\frac{2\pi \tau}{M},\chi_{1}\right) d\tau &= \frac{L'}{L}(1,\chi_{1})\int_{(\mathcal{C}_{1})}f(\tau)d\tau + O\left(\frac{1}{M}\right).
\end{align*}
By the rapid decay of $f$,
\begin{equation*}
 \int_{(\mathcal{C}_{1})}f\left(\tau\right) d\tau = \int_{\R}f\left(\tau\right) d\tau + O\left(\frac{1}{M^{100}}\right),
\end{equation*}
so that
\begin{equation*}
\int_{(\mathcal{C}_{1})}f\left(\tau\right)\frac{L'}{L}\left(1+i\frac{2\pi \tau}{M},\chi_{1}\right) d\tau =\widehat{f}(0)\frac{L'}{L}(1,\chi_{1}) + O\left(\frac{1}{M}\right).
\end{equation*}
Equation (\ref{Sl}) then follows from Lemma \ref{special value}. Equation (\ref{SA}) follows from an identical argument, upon noting that

\begin{equation*}
\frac{\partial}{\partial \alpha}A(\alpha,\alpha)\bigg|_{\alpha = 0} = -2\sum_{p \equiv 3(4)}\frac{\log p}{p^2-1}.
\end{equation*}
\end{proof}

\subsection{Computing $S_{\Gamma}$:}

\begin{lemma}\label{Sgamma lemma}
We have the asymptotic formula
\begin{align*}
S_{\Gamma} = \frac{f(0)}{2}-\frac{1}{2}\int_{\R}\widehat{f}(x) \cdot 1_{[-1,1]}(x)dx - \frac{d}{M}\cdot \widehat{f}(1)+O\left(\frac{1}{M^{2}}\right),
\end{align*}
where $d$ is as in (\ref{d constant}).
\end{lemma}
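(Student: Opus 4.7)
The plan is to shift the contour defining $S_\Gamma$ across the simple pole of $\zeta(1 - 2\pi i\tau/M)$ at $\tau = 0$, then expand $\zeta$ as a Dirichlet series on the shifted contour and evaluate the resulting sum via Abel summation.

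\textbf{Step 1 (residue at $\tau=0$).} Since $A(0,0)=1$, $L(1,\chi_{1})/L(1,\chi_{1})=1$, and the Laurent expansion $\zeta(1-2\pi i\tau/M) = -M/(2\pi i\tau) + \gamma_0 + O(\tau/M)$ holds near $\tau=0$, the residue of the integrand of $S_\Gamma$ at $\tau=0$ equals $-f(0)M/(2\pi i)$. Since $(\mathcal{C}')$ lies in the lower half-plane, shifting it upward across the real axis to a horizontal contour $\mathcal{C}_+$ at height $\textnormal{Im}(\tau)=\epsilon>0$ will produce, via the residue theorem,
\begin{equation*}
S_\Gamma = f(0) - \frac{1}{M}\int_{\mathcal{C}_+} f(\tau)\, G(\tau)\, d\tau,
\end{equation*}
where $G(\tau)$ denotes the integrand of $S_\Gamma$ divided by $f(\tau)$.

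\textbf{Step 2 (Dirichlet expansion and Fourier inversion).} On $\mathcal{C}_+$ we have $\textnormal{Re}(1-2\pi i\tau/M)>1$, so the Dirichlet series $\zeta(1 - 2\pi i\tau/M) = \sum_{n\geq 1} n^{-1}n^{2\pi i\tau/M}$ converges absolutely; combining $n^{2\pi i\tau/M}$ with $(\pi/(2K))^{2\pi i\tau/M}$ yields $e^{2\pi i\tau \mu_n}$ with $\mu_n := M^{-1}\log(n\pi/(2K))$. Interchanging sum and integral (justified by absolute convergence and the rapid decay of $f$) and pulling each contour back to $\R$ (legal since the analytic factor $h(\tau) := A(-\pi i\tau/M, \pi i\tau/M)(1-2\pi i\tau/M)^{-1} L(1+2\pi i\tau/M,\chi_1)/L(1,\chi_1)$ is holomorphic in a strip containing $\R$ and $\mathcal{C}_+$), the integral becomes a Fourier transform of $f\cdot h$. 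Taylor expanding $h$ at $\tau=0$ using Lemmas \ref{A derivative at zero} and \ref{special value} (together with the geometric expansion of $(1-2\pi i\tau/M)^{-1}$) produces $h(\tau)=1+(\pi i\beta\tau)/M+O(\tau^2/M^2)$ with
\begin{equation*}
\beta := 4\sum_{p\equiv 3(4)}\frac{\log p}{p^2-1} + 2 + 2\bigl(\gamma_0 - 2\log 2 - 4\log|\eta(i)|\bigr),
\end{equation*}
so the standard Fourier identities reduce the $n$th integral to $\widehat f(\lambda_n) - (\beta/(2M))\widehat f'(\lambda_n) + O(M^{-2})$, where $\lambda_n := -\mu_n = M^{-1}\log(2K/(n\pi))$.

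\textbf{Step 3 (Abel summation and assembly).} For smooth $g$ of compact support, Abel summation together with the asymptotic $\sum_{n\leq x}1/n = \log x + \gamma_0 + O(1/x)$ gives
\begin{equation*}
\sum_{n=1}^\infty \frac{g(n)}{n} = \int_1^\infty \frac{g(x)}{x}\,dx + \gamma_0\, g(1) + O\!\left(\int_1^\infty \frac{|g'(x)|}{x}\,dx\right).
\end{equation*}
Applying this to $g(x) = \widehat f(\lambda_x)$ (respectively $\widehat f'(\lambda_x)$) and using the substitution $u=\lambda_x$ yields $\sum_n n^{-1}\widehat f(\lambda_n) = M\int_{-\infty}^{\lambda_1}\widehat f(u)\,du + \gamma_0\widehat f(\lambda_1) + O(1/M)$ and $\sum_n n^{-1}\widehat f'(\lambda_n) = M\widehat f(\lambda_1) + O(1)$. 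Expanding $\lambda_1 = 1 + (\log 2 - \log\pi)/M$ about $1$ and assembling yields
\begin{equation*}
\frac{1}{M}\int_{\mathcal{C}_+} f(\tau)G(\tau)\,d\tau = \int_{-\infty}^1 \widehat f\,du + \frac{\widehat f(1)}{M}\bigl[(\log 2 - \log\pi) + \gamma_0 - \beta/2\bigr] + O(M^{-2}).
\end{equation*}
Evenness of $\widehat f$ gives $f(0)-\int_{-\infty}^1\widehat f = \int_1^\infty\widehat f = f(0)/2 - (1/2)\int_{-1}^1\widehat f(x)\,dx$, and direct substitution of $\beta$ confirms $(\log 2-\log\pi)+\gamma_0-\beta/2 = d$ as in (\ref{d constant}).

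\textbf{Main obstacle.} The subtle step is the sum-to-integral conversion. Naive Euler--Maclaurin with the endpoint correction $\widehat f(\lambda_1)/2$ yields an answer off by $\gamma_0 - 1/2$; one must use Abel summation with the full expansion $\sum_{n\leq x}1/n = \log x + \gamma_0 + O(1/x)$, whose $\gamma_0$ is precisely what cancels the $\gamma_0$ from $L'(1,\chi_1)/L(1,\chi_1)$ inside $\beta$, leaving the transcendental constant $4\log|\eta(i)|$ and the prime sum $-2\sum_{p\equiv 3(4)}\log p/(p^2-1)$ to combine with $3\log 2 - 1 - \log\pi$ in the final expression for $d$.
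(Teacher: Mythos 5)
Your argument is correct and arrives at the right constant $d$, but it takes a genuinely different route from the paper's. The paper never expands $\zeta$ into a Dirichlet series: it shifts $(\mathcal{C}')$ up to the real axis with a small semicircular detour $\mathcal{C}_{\eta}$ around the pole at $\tau=0$, Taylor-expands every factor using the Laurent expansion $\zeta(1-2\pi i \tau/M)=\tfrac{Mi}{2\pi\tau}+\gamma_{0}+\cdots$, and evaluates the surviving principal term $\int f(\tau)e^{-2\pi i\tau}/(2\pi i \tau)\,d\tau$ by splitting $e^{-2\pi i\tau}$ into cosine and sine parts: the odd cosine integrand collapses onto $\mathcal{C}_{\eta}$ and yields $f(0)/2$ (half a residue), while the sine part gives $-\tfrac12\int_{-1}^{1}\widehat{f}$ by Plancherel. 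You instead cross the pole completely, collect the full residue $f(0)$, and recover the missing $-f(0)/2$ together with the $\widehat{f}(1)/M$ term arithmetically, by writing $\zeta$ as $\sum_{n}n^{-1}e^{2\pi i\tau\mu_{n}}$ on the far side and evaluating $\sum_{n}n^{-1}\widehat{f}(\lambda_{n})$ by partial summation; the $\gamma_{0}$ from $\sum_{n\leq x}1/n$ plays exactly the role of the $\gamma_{0}$ in the Laurent expansion, cancelling against $L'/L(1,\chi_{1})$, as you correctly observe. Your route is closer in spirit to the explicit-formula computations of Section 6 and makes the origin of each constant in $d$ transparent; the paper's route is shorter because it never converts the sum back into an integral. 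One step you should make explicit: after interchanging sum and integral, the per-term remainder from truncating the Taylor expansion of the holomorphic factor must be bounded by $O_{A}\left(M^{-2}(1+|\lambda_{n}|)^{-A}\right)$ (obtained by integrating by parts, using that the remainder has a double zero at $\tau=0$ with all relevant derivatives of size $O(M^{-2})$) before summing against the weights $(Mn)^{-1}$; a bound that is merely $O(M^{-2})$ uniformly in $n$ would only yield a total error $O(M^{-1})$, since $\sum_{n}n^{-1}\Phi(\lambda_{n})\asymp M\int_{\R}\Phi$ for decaying $\Phi$. Equivalently, one can regroup the remainders into $-\tfrac1M\int f\cdot r\cdot\zeta\cdot(\pi/2K)^{2\pi i\tau/M}d\tau$ and use that $r\zeta$ is holomorphic and $O(\tau/M)$ near $\tau=0$.
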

\begin{proof}
The methods employed in this section are quite similar to those found in \cite{FiorParkS}.  Set
\begin{equation*}
h(\tau) := \frac{A\left(-\frac{\pi i \tau}{M},\frac{\pi i \tau}{M}\right)}{1-\frac{2\pi i \tau}{M}}\left(\frac{\pi}{2K}\right)^{\frac{2\pi i \tau}{M}}\frac{L\left(1+\frac{2\pi i \tau}{M},\chi_{1}\right)}{L(1,\chi_{1})}\zeta\left(1-\frac{2\pi i \tau}{M}\right),
\end{equation*}

so that we may write
\begin{equation*}
S_{\Gamma}=-\frac{1}{M}\int_{(\mathcal{C}')}f(\tau)h(\tau)d\tau.
\end{equation*}
We shift the contour of integration in $S_{\Gamma}$ to the path $\mathcal{C}:= \mathcal{C}_{0} \cup \mathcal{C}_{1} \cup \mathcal{C}_{\eta}$, where
\begin{equation*}
\mathcal{C}_{0}:= \{\textnormal{Im}(\tau) = 0, \textnormal{Re}(\tau) \geq M^{\epsilon}\}, \hspace{3mm} \mathcal{C}_{1}:= \{\textnormal{Im}(\tau) = 0, \eta \leq |\textnormal{Re}(\tau)|\leq M^{\epsilon}\},
\end{equation*}
and
\begin{equation*}
\mathcal{C}_{\eta} := \{\tau=\eta e^{i \theta}, \theta \in [-\pi,0]\}.
\end{equation*}
For the part of the integral over $\mathcal{C}_{0}$, we use the rapid decay of $f$, as well as upper bounds on the components of the integrand within the critical strip, to bound

\begin{equation*}
\int_{(\mathcal{C}_{0})}f(\tau)h(\tau)d\tau \ll \frac{1}{M^{100}}.
\end{equation*}
Since the integrand is uniformly continuous on the compact set $\mathcal{C}_{1} \cup \mathcal{C}_{\eta}$, we then Taylor expand each component of the integrand, i.e. write
\begin{equation}\label{gamma factor}
\frac{1}{1-\frac{2\pi i \tau}{M}}\left(\frac{\pi}{2K}\right)^{\frac{2\pi i \tau}{M}} = e^{-2\pi i \tau }+e^{-2\pi i \tau}\frac{2\pi i \tau}{M}\bigg(\log \frac{\pi}{2}+1\bigg)+O\left(\frac{\tau^2}{M^{2}}\right),
\end{equation}

\begin{equation}\label{z factor}
\zeta\left(1-\frac{2\pi i \tau}{M}\right) = \frac{M}{2\pi \tau}i+\gamma_{0}+O\left(\frac{\tau}{M}\right),
\end{equation}

\begin{equation}\label{L factor}
\frac{L\left(1+\frac{2\pi i \tau}{M},\chi_{1}\right)}{L(1,\chi_{1})} = 1+\frac{L'}{L}(1,\chi_{1})\cdot \frac{2\pi i \tau}{M}+O\left(\frac{\tau^2}{M^2}\right),
\end{equation}
and
\begin{equation}\label{A factor}
A\left(-\frac{\pi i \tau}{M},\frac{\pi i \tau}{M}\right) = 1+2\sum_{p \equiv 3(4)}\frac{\log p}{p^{2}-1}\cdot \frac{2 \pi i \tau}{M}+O\left(\frac{\tau^2}{M^2}\right),
\end{equation}
by Lemma \ref{A derivative at zero}. Combining equations (\ref{gamma factor})$-$(\ref{A factor}), we find that
\begin{align}\label{sgamma taylor}
\begin{split}
S_{\Gamma}&= -\frac{1}{M}\int_{(\mathcal{C}_{1} \cup \mathcal{C}_{\eta})}f(\tau)h(\tau)d\tau +\left(\frac{1}{M^{100}}\right)\\
& = -\frac{1}{M}\int_{(\mathcal{C}_{1} \cup \mathcal{C}_{\eta})}f(\tau)\Bigg(e^{-2\pi i \tau }\bigg(1+\bigg(\log \frac{\pi}{2}+1\bigg)\frac{2\pi i \tau}{M}\bigg)\bigg(1+\frac{L'}{L}(1,\chi_{1})\cdot \frac{2\pi i \tau}{M}\bigg)\\
& \phantom{==}\times \bigg(\frac{M}{2\pi \tau}i+\gamma_{0}\bigg)\bigg(1+2\sum_{p\equiv 3 (4)}\frac{\log p}{p^{2}-1}\cdot \frac{2\pi i \tau}{M}\bigg)+O\left(\frac{\tau^2}{M^{2}}\right)\Bigg)d\tau+\left(\frac{1}{M^{100}}\right)\\
&= \int_{(\mathcal{C}_{1} \cup \mathcal{C}_{\eta})}f(\tau)\frac{e^{-2\pi i \tau}}{2\pi i \tau}d\tau-\frac{d}{M} \int_{(\mathcal{C}_{1} \cup \mathcal{C}_{\eta})}f(\tau)\cdot e^{-2\pi i \tau}d\tau+O\left(\frac{1}{M^{2}}\right)\\
&= \int_{(\mathcal{C}_{1} \cup \mathcal{C}_{\eta})}f(\tau)\frac{e^{-2\pi i \tau}}{2\pi i \tau}d\tau-\frac{d}{M}\cdot \widehat{f}(1)+O\left(\frac{1}{M^{2}}\right),
\end{split}
\end{align}
where $d$ is as in (\ref{d constant}), and where we note that by the holomorphy of the integrand and the rapid decay of $f$,

\begin{equation*}
\int_{(\mathcal{C}_{1} \cup \mathcal{C}_{\eta})}f(\tau)\cdot e^{-2\pi i \tau}d\tau = \widehat{f}(1)+O\left(\frac{1}{M^{100}}\right).
\end{equation*}
Since $\mathcal{C}:= \mathcal{C}_{0} \cup \mathcal{C}_{1} \cup \mathcal{C}_{\eta}$, we may moreover write
\begin{align}\label{J1 and J2}
\begin{split}
\int_{(\mathcal{C}_{1} \cup \mathcal{C}_{\eta})}f(\tau)\frac{e^{-2\pi i \tau}}{2\pi i \tau}d\tau &= \frac{1}{2\pi i}\int_{(\mathcal{C})}\frac{f(\tau)}{\tau}e^{-2\pi i \tau}d\tau+O\left(\frac{1}{M^{100}}\right)\\
&= J_{1}+J_{2}+O\left(\frac{1}{M^{100}}\right),
\end{split}
\end{align}
where
\begin{equation*}
J_{1}:= \frac{1}{2\pi i}\int_{(\mathcal{C})}\cos(2\pi \tau)\frac{f(\tau)}{\tau}d\tau,
\end{equation*}
and
\begin{equation*}
J_{2}:= -\int_{(\mathcal{C})}\frac{\sin(2\pi \tau)}{2\pi \tau}f(\tau)d\tau,
\end{equation*}
both of which are well-defined by the rapid decay of $f$.\\
\\
\textbf{Computing $J_{1}$:}
Since the integrand in $J_{1}$ is odd, integration over the path $\mathcal{C}_{0} \cup \mathcal{C}_{1}$ is zero.  Hence
\begin{align*}
J_{1}&= \frac{1}{2\pi i}\int_{\mathcal{C}_{\eta}}\cos(2\pi \tau)\frac{f(\tau)}{\tau}d\tau= \frac{1}{2\pi}\int_{-\pi}^{0}\cos(2\pi \eta e^{i \theta})f\left(\eta e^{i \theta}\right)d\theta,
\end{align*}
where we apply the change of variables $\tau = \eta e^{i \theta}$.  Since
\begin{equation*}
\cos(2\pi \eta e^{i \theta}) = 1+O\left(\eta^{2}\right),
\end{equation*}
we find that in the limit as $\eta \rightarrow 0$,
\begin{align} \label{J1}
J_{1} = \lim_{\eta \rightarrow 0}\frac{1}{2\pi}  \int_{-\pi}^{0}f\left(\eta e^{i \theta}\right)d\theta &=
\frac{1}{2\pi}\cdot \pi f(0)= \frac{f(0)}{2}.
\end{align}

\textbf{Computing $J_{2}$:}
Note that since $\sin(2\pi \tau)/2\pi \tau$ is entire except for a removable singularity at $\tau=0$, we may shift the line of integration to obtain
\begin{equation} \label{J2}
J_{2} = -\int_{\R}\frac{\sin(2\pi \tau)}{2\pi \tau}f(\tau)d\tau = -\frac{1}{2}\int_{-1}^{1}\widehat{f}(\tau)d\tau,
\end{equation}
as in (\ref{small support symplectic}). This coincides with the second term in the Katz-Sarnak prediction.  Since all the error terms are independent of $\eta$, Lemma \ref{Sgamma lemma} then follows from (\ref{sgamma taylor}), (\ref{J1 and J2}), (\ref{J1}), and (\ref{J2}).
\end{proof}

Conjecture \ref{ratios prediction} now follows from Conjecture \ref{1-c line} and Lemmas \ref{computing Wf}, \ref{Szeta}, \ref{arithmetic terms}, and \ref{Sgamma lemma}.  In particular, when supp$(\widehat{f}) \subseteq (-1,1)$, the conjecture implies that,
\begin{align*}
D_{1}(\mathcal{F}(K);f)&= \widehat{f}(0)-\frac {f(0)}{2}+c\cdot \frac{\widehat{f}(0)}{M}+O\left(\frac{1}{M^{2}}\right),
\end{align*}
in agreement with Theorem \ref{oneleveldensity}.
\begin{Remark}\label{remark lower order accuracy}
 \textnormal{Upon applying the convolution theorem, we note that when supp$(\widehat{f}) \subseteq (-1,1)$,
\[\int_{\R}f(\tau)\tau^{n} e^{-2\pi i \tau}d\tau = 0\]
for any $n \in \N$.  Hence, upon computing additional terms in the Taylor series expansion of $h(\tau)$, it may be shown that in fact supp$(\widehat{f}) \subseteq (-1,1) \Rightarrow S_{\Gamma} \ll M^{-n}$ for any fixed $n > 0.$}
\end{Remark}
\section{Proof of Theorem \ref{oneleveldensity} and Corollary \ref{nonvanish}}

\subsection{Explicit Formula}
As in (\ref{one level}), we note that

\begin{equation*}
D_{1}(\mathcal{F}(K);f) = \frac{1}{2\pi i}\int_{(c')}\Bigg(2\bigg \langle \frac{L'_{k}}{L_{k}}\left(\frac 1 2+r\right)\bigg \rangle_{K}-\bigg \langle \frac{X'_{k}}{X_{k}}\left(\frac 1 2+r\right)\bigg \rangle_{K}\Bigg)f\left(\frac{i M r}{\pi}\right)dr,
\end{equation*}
where now, unconditionally, we choose $c'\geq 1/2$.  By (\ref{Lk log derivative}) and an identical argument to that found in Lemma \ref{zeta and L as sum},

\begin{align*}
\frac{1}{\pi i}\int_{(c')}\frac{L_{k}'}{L_{k}}\bigg(\frac{1}{2}+r\bigg)f\left(\frac{i M r}{\pi}\right)dr&= -\frac{1}{M}\sum_{n=1}^{\infty}\frac{c_{k}(n)}{\sqrt{n}}\widehat{f}\left(\frac{\log n}{2M}\right).
\end{align*} 

It follows that the averaged scaled one-level density is given by

\begin{align}\label{GRH result}
D_{1}(\mathcal{F}(K);f)&= W_{f}+S_{\text{split}}+S_{\text{inert}}+S_{\text{ram}},
\end{align}
where

\begin{equation*}
S_{\text{inert}} := -\frac{2}{M}\sum_{\substack{p \textnormal{ prime }\\p\equiv 3(4)\\ l \in 2\N}}\frac{\Lambda(p^{l})}{p^{l/2}}\widehat{f}\left(\frac{l\textnormal{ log }p}{2M}\right),
\end{equation*}

\begin{equation*}
S_{\text{ram}} := - \frac{1}{M}\frac{1}{K}\sum_{k=1}^{K}\sum_{l=1}^{\infty}\frac{(-1)^{lk}\Lambda(2^{l})}{2^{l/2}}\widehat{f}\left(\frac{l \textnormal{ log }2}{2M}\right),
\end{equation*} 
and
\begin{equation*}
\begin{split}
S_{\text{split}} := -\frac{1}{M}\sum_{\substack{p \textnormal{ prime }\\ p\equiv 1(4)\\ l \in \mathbb{N}}}\frac{\Lambda(p^{l})}{p^{l/2}}\widehat{f}\left(\frac{l\cdot \textnormal{log }p}{2M}\right)\bigg(\langle \Xi_{k}(\p_{1}^{l})\rangle_{K}+\langle \Xi_{k}(\p_{2}^{l})\rangle_{K}\bigg).
\end{split}
\end{equation*}
We now proceed to compute $S_{\text{inert}}$, $S_{\text{ram}}$, and $S_{\text{split}}$.

\subsection{Ramified and Inert Primes}\label{ramified}
We compute
\begin{align*}
&S_{\text{ram}} = - \frac{1}{M}\frac{1}{K}\sum_{k=1}^{K}\sum_{l=1}^{\infty}\frac{(-1)^{lk}\Lambda(2^{l})}{2^{l/2}}\widehat{f}\left(\frac{l\textnormal{ log }2}{2M}\right)\\
&=- \frac{1}{M}\frac{1}{K}\Bigg(\sum_{k=1}^{K}\sum_{\substack{l=2\\ \textnormal{even}}}^{\infty}\frac{\log 2}{2^{l/2}}\widehat{f}\left(\frac{l \textnormal{ log }2}{2M}\right)+\sum_{k=1}^{K}(-1)^{k}\sum_{\substack{l=1\\ \textnormal{odd}}}^{\infty}\frac{\log 2}{2^{l/2}}\widehat{f}\left(\frac{l \textnormal{ log }2}{2M}\right)\Bigg)\\
&=- \frac{\log 2}{M}\sum_{n=1}^{\infty}\frac{1}{2^{n}}\widehat{f}\left(\frac{n \textnormal{ log }2}{M}\right)+O\left(\frac{1}{K}\right).
\end{align*}
Upon noting that
\begin{equation*}
\chi_{1}(n)-1=
\left\{
\begin{array}{l l}
- 2 & \text{ if } n \equiv 3(4) \\
0 & \text{ if } n \equiv 1(4) \\
- 1 & \text{ otherwise},\\
\end{array} \right.
\end{equation*}
it follows from Lemma \ref{zeta and L as sum} that
\begin{align*}
S_{\zeta}+S_{L}&=\frac{1}{M}\sum_{n=1}^{\infty}\frac{\Lambda(n)(\chi_{1}(n)-1)}{n}\widehat{f}\left(\frac{\log n}{M}\right)\\
&=-\frac{2}{M}\sum_{\substack{n \in \N \\ n \equiv 3(4)}}\frac{\Lambda(n)}{n}\widehat{f}\left(\frac{\log n}{M}\right)- \frac{1}{M}\sum_{n=1}^{\infty}\frac{\log 2}{2^{n}}\widehat{f}\left(\frac{n\textnormal{ log }2}{M}\right)\\
&=-\frac{2}{M}\sum_{p \equiv 3(4)}^{\infty}\sum_{n=1}^{\infty}\frac{\log p}{p^{2n+1}}\widehat{f}\left(\frac{(2n+1) \log p}{M}\right)+S_{\text{ram}}+O\left(\frac{1}{K}\right)\\
&=-\frac{2}{M}\sum_{p\equiv 3(4)}\sum_{n=1}^{\infty}\frac{\log p}{p^{n}}\widehat{f}\left(\frac{n \textnormal{ log }p}{M}\right)\\
&\hspace{4mm}+\frac{2}{M}\sum_{p\equiv 3(4)}\sum_{n=1}^{\infty}\frac{\log p}{p^{2n}}\widehat{f}\left(\frac{2n \textnormal{ log }p}{M}\right)+S_{\text{ram}}+O\left(\frac{1}{K}\right)\\
&= S_{\text{inert}} - S_{A'}+S_{\text{ram}}+O\left(\frac{1}{K}\right),
\end{align*}
yielding (\ref{GRH Ratios Relation}).
\subsection{Split Primes}
By (\ref{GRH Ratios Relation}) and (\ref{GRH result}) we obtain the following corollary:
\begin{corollary}\label{character sum}
If Conjecture \ref{1-c line} is true, then
\begin{align*}
S_{\text{split}} &= S_{\Gamma} +O\left(e^{-\frac{M}{2}}\right)\\
&= \frac{f(0)}{2}-\frac{1}{2}\int_{\R}\widehat{f}(x) \cdot 1_{[-1,1]}(x)dx - \frac{c}{M}\cdot \widehat{f}(1)+O\left(\frac{1}{M^{2}}\right).
\end{align*}
\end{corollary}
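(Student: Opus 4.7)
The plan is to derive Corollary \ref{character sum} by equating the two expressions for $D_{1}(\mathcal{F}(K);f)$ already available: the unconditional explicit-formula decomposition in (\ref{GRH result}) and the conditional Ratios-Conjecture decomposition in Conjecture \ref{1-c line}. Since both sides equal $D_{1}(\mathcal{F}(K);f)$, equating them and cancelling the common term $W_{f}$ yields
\begin{equation*}
S_{\text{split}} + S_{\text{inert}} + S_{\text{ram}} = S_{\zeta} + S_{L} + S_{A'} + S_{\Gamma} + O\left(K^{-1/2+\epsilon}\right).
\end{equation*}

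The next step is to invoke the identity (\ref{GRH Ratios Relation}), namely $S_{\text{inert}} + S_{\text{ram}} = S_{\zeta} + S_{L} + S_{A'} + O(1/K)$, which was already established in Section 6.2 via elementary manipulation of the Dirichlet series expansions of $\zeta'/\zeta$ and $L'/L(\cdot,\chi_{1})$ (using the fact that $\chi_{1}(n) - 1$ vanishes on $n \equiv 1\pmod 4$ and isolates the inert and ramified prime-power contributions). Substituting this identity causes the arithmetic pieces to cancel exactly, and we obtain
\begin{equation*}
S_{\text{split}} = S_{\Gamma} + O\left(K^{-1/2+\epsilon}\right) = S_{\Gamma} + O\left(e^{-M/2}\right),
\end{equation*}
where the second form uses $M = \log K$ and absorbs the $\epsilon$ into the implied constant by slightly weakening the exponential rate. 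For the second displayed equality in the corollary, I would then simply substitute the explicit asymptotic for $S_{\Gamma}$ provided by Lemma \ref{Sgamma lemma}.

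The proof is essentially a bookkeeping assembly, with all of the genuine analytic content already carried out: the Taylor expansion and contour-shift argument underlying Lemma \ref{Sgamma lemma}, and the sum reorganization behind (\ref{GRH Ratios Relation}). Consequently no substantive new obstacle arises here; the only care needed is verifying that the $O(1/K)$ error from (\ref{GRH Ratios Relation}) is absorbed into the weaker Ratios-Conjecture error $O(K^{-1/2+\epsilon})$, which is immediate.
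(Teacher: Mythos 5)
Your proposal is correct and is essentially identical to the paper's own (one-line) argument: equate the unconditional decomposition (\ref{GRH result}) with Conjecture \ref{1-c line}, cancel $W_{f}$ and the arithmetic terms via (\ref{GRH Ratios Relation}), and then substitute Lemma \ref{Sgamma lemma}. Note only that this substitution produces the constant $d$ from Lemma \ref{Sgamma lemma} rather than the $c$ appearing in the corollary's display (a typo in the statement), and that the Ratios error is strictly $O(K^{-1/2+\epsilon})$ rather than $O(e^{-M/2})$ -- both discrepancies are already present in the paper itself.
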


\subsubsection{Proof of Theorem \ref{oneleveldensity}}
We now compute $S_{\text{split}}$ when supp$(\widehat{f}) \subseteq  [-\alpha,\alpha]$ for some $\alpha < 1$.  First, bound

\begin{align}\label{split bound}
S_{\text{split}} &\ll \frac{1}{M}\sum_{\substack{\A \subset \Z[i]\\ \theta_{\A}\neq 0}}\frac{\Lambda(N(\A))}{N(\A)^{1/2}}\bigg |\widehat{f}\left(\frac{\log N(\A)}{2M}\right)\bigg | \cdot |\langle \Xi_{k}(\A)\rangle_{K}|.
\end{align}

Since $\theta_{\A} \notin  \pi \Z$, we have that
\begin{align}\label{angle average}
\langle \Xi_{k}(\A)\rangle_{K} &= \frac{1}{K}\sum_{k =1}^{K}e^{4ki \theta_{\A}} = \frac{1}{K}\cdot \frac{\Xi_{1}(\A)-\Xi_{K}(\A)}{1-\Xi_{1}(\A)}\ll \frac{1}{K \theta_{\A}},
\end{align}

where we note that 
\begin{equation*}
|1-\Xi_{1}(\A)| = 2|\sin(2 \theta_{\A})|\gg \theta_{\A}.
\end{equation*}

Moreover, by Lemma 2.1 in \cite{RudWax}, we have that
\begin{equation}\label{angle bound}
\theta_{\A} \gg \frac{1}{\sqrt{N(\A)}},
\end{equation}
i.e. the smallest possible angle attached to an ideal of norm $x$ is $\gg x^{-1/2}$.\\
\\
To each ideal $\A \subset \mathbb{Z}[i]$, we associate an annulus sector $S_{\A}$ of fixed area, enclosing the lattice point of radius $r_{\A} =\sqrt{N(\A)}$ and angle $\theta_{\A}$.  We moreover choose each $S_{\A}$ to be sufficiently small, so that $S_{\A} \cap S_{\A'}=0$ for any $\A \neq \A'$.  Then

\begin{equation}\label{local bound}
\frac{\Lambda(N(\A))}{\sqrt{N(\A)}}\bigg |\widehat{f}\left(\frac{\log N(\A)}{2M}\right)\bigg|\frac{1}{\theta_{\A}} \ll \iint_{S_{\A}} \log r\bigg |\widehat{f}\left(\frac{\log r_{\A}'}{M}\right)\bigg | \frac{d\theta}{\theta} dr,
\end{equation}
where $\iint_{S_{\A}} d\theta r dr $ denotes the integral over $S_{\A}$, and where each $r_{\A}' \in S_{\A}$ is chosen such that
\[\bigg |\widehat{f}\left(\frac{\log r'_{\A}}{M}\right)\bigg | \geq \bigg |\widehat{f}\left(\frac{\log r}{M}\right)\bigg |\]
 for all $re^{i\theta} \in S_{\A}$.  By (\ref{split bound}), (\ref{angle average}), (\ref{angle bound}), and (\ref{local bound}), and upon recalling that supp$(\widehat{f}) \subseteq  [-\alpha,\alpha]$, we find that

\begin{align*}
S_{\text{split}} &\ll \frac{1}{K}\frac{1}{M}\sum_{\substack{\A \subset \Z[i]\\ \theta_{\A}\neq 0}} \frac{\Lambda(N(\A))}{\sqrt{N(\A)}}\bigg |\widehat{f}\left(\frac{\log N(\A)}{2M}\right)\bigg|\frac{1}{\theta_{\A}}\\
&\ll \frac{1}{K}\frac{1}{M}\sum_{\substack{\A \subset \Z[i]\\ \theta_{\A}\neq 0}} \iint_{S_{\A}} \log r \bigg |\widehat{f}\left(\frac{ \log r'_{\A}}{M}\right)\bigg | \frac{d\theta}{\theta}dr\\
&\ll \frac{1}{K}\frac{1}{M}\int_{1}^{K^{\alpha}}\log r \bigg(\int_{r^{-1}}^{\frac{\pi}{2}}\frac{d \theta}{\theta}\bigg) dr\\
&\ll \frac{1}{K}\frac{1}{M}\int_{1}^{K^{\alpha}}(\log r)^{2}dr\\
& \ll K^{\alpha-1}\cdot \log K.
\end{align*}

In particular, we conclude that when $\alpha< 1$,

\begin{equation}\label{split}
S_{\text{split}} = O_{\alpha}\left(K^{-\delta}\right)
\end{equation}
for some $\delta >0$. Theorem \ref{oneleveldensity} now follows upon collecting the results in (\ref{GRH Ratios Relation}), Lemma \ref{computing Wf}, Lemma \ref{Szeta}, Lemma \ref{arithmetic terms}, (\ref{GRH result}), and (\ref{split}).

\begin{Remark}\label{remark unconditional match}
\textnormal{From (\ref{split}) and Remark \ref{remark lower order accuracy} it follows that if supp$(\widehat{f}) \subset [-\alpha,\alpha]$ for some $\alpha< 1$, then $D_{1}(\mathcal{F}(K);f)$ agrees with the prediction of the Ratios Conjecture down to an accuracy of size $O(M^{-n})$, for any $n \geq 2$.}
\end{Remark}

\subsubsection{Proof of Corollary \ref{nonvanish}}
\begin{proof}
Let
\begin{equation*}
p_{m}(K) = \frac{1}{K}\#\{L_{k}(s) \in \mathcal{F}(K):\textnormal{ord}_{s=\frac 1 2}L_{k}(1/2)=m\},
\end{equation*}
and choose an admissible test function $f(x) \geq 0$ such that $f(0) =1$.  Then under the assumption of GRH,
\begin{equation*}
\sum_{m=1}^{\infty}mp_{m}(K)\leq D_{1}(\mathcal{F}(K);f).
\end{equation*}
Moreover, by the symmetry of the functional equation, we find that the order of vanishing at the central point must be even, 
i.e. $p_{2m+1}(K)=0$.  If supp$(\widehat{f}) \subset (-1,1)$, we then obtain 
\begin{align*}
p_{0}(K) &= 1-\sum_{m=1}^{\infty}p_{m}(K)\geq 1-\frac{1}{2}D_{1}(\mathcal{F}(K);f)= \frac{5}{4}-\frac{\widehat{f}(0)}{2}+o(1),
\end{align*}
by Theorem \ref{oneleveldensity}. Corollary \ref{nonvanish} then follows upon choosing the Fourier pair
\begin{equation*}
f(x) = \left(\frac{\sin (\nu \pi x)}{\nu \pi x}\right)^{2}, \hspace{5mm} \widehat{f}(y) = \left\{
\begin{array}{l l}
\frac{1}{\nu}\left(1-\frac{|y|}{\nu}\right) & \textnormal{ if } |y|< \nu. \\
0 & \text{ otherwise }
\end{array} \right.
\end{equation*}

and then taking the limit as $a \rightarrow 1^{-}$.
\end{proof}

\end{document}